\newtheorem{theorem}{THEOREM}[section]
\newtheorem{lemma}{LEMMA}[section]
\newcommand{\be}{\begin{equation}}
\newcommand{\ee}{\end{equation}}
\newcommand{\ba}{\begin{array}}
\newcommand{\ea}{\end{array}}
\newcommand{\bea}{\begin{eqnarray}}
\newcommand{\eea}{\end{eqnarray}}
\newcommand{\beas}{\begin{eqnarray*}}
\newcommand{\eeas}{\end{eqnarray*}}
\title{Analysis and computation of some tumor growth models with nutrient: from cell density models to free boundary dynamics}
\author{Jian-Guo Liu\thanks{Department of Mathematics and Department of Physics, Duke University (jliu@phy.duke.edu). } \quad Min Tang\thanks{School of Mathematics and Institute of Natural Sciences, MOE-LSC, Shanghai JiaoTong University (tangmin@sjtu.edu.cn) } \quad Li Wang\thanks{Department of Mathematics, Computational and Data-Enabled Science and Engineering Program, State University of New York at Buffalo (lwang46@buffalo.edu). } \quad Zhennan Zhou\thanks{Beijing International Center for Mathematical Research, Peking University (zhennan@bicmr.pku.edu.cn).}}
\begin{document}
\maketitle
\begin{abstract}
In this paper, we study the tumor growth equation along with various models for the nutrient component, including the \emph{in vitro} model and  the \emph{in vivo} model. At the cell density level, the spatial availability of the tumor density $n$ is governed by the Darcy law via the pressure $p(n)=n^{\gamma}$. For finite $\gamma$, we prove some a priori estimates of the tumor growth model, such as boundedness of the nutrient density, and non-negativity and growth estimate of the tumor density. As $\gamma \rightarrow \infty$, the cell density models formally converge to Hele-Shaw flow models, which determine the free boundary dynamics of the tumor tissue in the incompressible limit. We derive several analytical solutions to the Hele-Shaw flow models, which serve as benchmark solutions to the geometric motion of tumor front propagation. Finally, we apply a conservative and positivity preserving numerical scheme to the cell density models, with numerical results verifying the link between cell density models and the free boundary dynamical models.
\end{abstract}

\section{Introduction}
Mathematical modeling and numerical simulations are of growing significance towards understanding cancer development, where the spatial effect has been  one of the most active areas  for modeling the growth of solid tumors. The tumor density can be influenced by a lot of effects, including concentration of nutrients, spatial availability due to contact inhibition, chemical signals, as well as other environmental factors, which yields numerous models for various tumors.     
In order to include spatial effects, two main directions can be found in the literature. One is to use a fluid mechanical view of a tissue, and write down the dynamics of the cell population density \cite{Drasdo2009, Tracy2015,Roose2007}, the other one relies on the fact that the tumor contours are distinguishable, so that one can
use an expanding set $D(t)$ to describe the tumor region \cite{Greenspan1972,Freidman2007,Freidman2008}. Using the asymptotic of a stiff law-of-state pressure, 
the rigorous analysis to build links between these two approaches has been given in \cite{Perthame2014,Tang2013} for those simple cases that the tumor proliferation depends only on contact inhibition. The formal derivation for more complicate cases that take into account other aspects of tumor growth can be found in \cite{Perthamenote}.

According to the setting as in \cite{Perthame2014}, we denote by $n(x,t)$ the cell population density and by $c(x,t)$ the nutrient concentration. The dynamics of the cell population density is governed by the following equation
\begin{equation}\label{eq:n}
\frac{\partial}{\partial t} n - \nabla \cdot \left( n \nabla p(n) \right)=n G(c), \quad x\in \mathbb R^d, \, t\ge0,
\end{equation}
where $p(n)=n^{\gamma}$ ($\gamma$ is a constant) is the pressure and $G(c)$ represents the growth that satisfies the following condition
\begin{equation} \label{eqn:G000}
G'(\cdot)\ge 0, \quad G(0)=0\,.
\end{equation}

The nutrient is governed by the following nutrient equation
\begin{equation}\label{eq:c0}
- \Delta c + \Psi (n,c)=0,
\end{equation}
where $\Psi(n,c)$ is the consumption function which takes different forms in different models.  
As in \cite{PTV2014}, two specific models considered here are the \emph{in vitro} model and the \emph{in vivo} model. For the \emph{in vitro} model, one assumes that the nutrient is constant outside the tumoral region; while the consumption is linear in $c$ inside, thus equation \eqref{eq:c0} reads
\begin{align} 
-\Delta c + \psi(n)c=0, & \quad \mbox{for} \, x\,\in\, D;  \label{eqn:c0-invitro} \\
c=c_B, & \quad   \mbox{for} \, x\,\in\,\mathbb{R} \backslash D, \label{eqn:c1-invitro}
\end{align}
where 
\begin{equation} \label{eqn:D000}
D = \{n(x)>0 \} = \{ p(n)>0 \}\,.
\end{equation}
Here $\psi(n)$ satisfies
\begin{equation} \label{eqn:psi000}
\psi(n)\geq 0  \quad \text {for } n\geq 0, \qquad \text{and} \qquad  \psi(0)=0\,.
\end{equation}
For the \emph{in vivo} model, the nutrient is brought by the vasculature network away from the tumor and diffused to the tissue. In this case, Eqn \eqref{eq:c0} writes
\begin{equation}\label{eq:c}
- \Delta c + \psi(n)c=\chi_{\{ n=0\}} (c_B-c)
\end{equation}
where $\psi$ is the same as in (\ref{eqn:psi000}).

We point out that, in the present paper, $G(c)$ defined in (\ref{eqn:G000}) only takes nonnegative values. Compared with the nutrient models in \cite{Tang2013,Perthame2014,Perthamenote}, we exclude the possibility that $G$ being negative, therefore, no necortic core can appear. Besides, we remark that, since there is no contact inhibition in the growth term, albeit those two nutrient models are of great practical significance, the analysis results in \cite{Perthame2014}, however, applies to neither case directly.

In order to build connections of the cell density model and the free boundary model, the state equation $p(n)$ takes the form $p(n)=n^{\gamma}$ in  \cite{Perthame2014}. The limit when $\gamma \rightarrow \infty$ is considered as the incompressible limit. On the one hand, this limit is physically relevant, it boils down to consider the tumor cell tissue  as an incompressible elastic material in a confined environment. On the other hand, it is mathematically interesting, since the limiting model becomes a Hele-Shaw type free boundary problem.  
To see what happens in the limit of $\gamma \rightarrow \infty$, we multiply the equation \eqref{eq:n} by $\gamma n^{\gamma-1}$ on both sides to get
\begin{align} \label{eqp}
 \frac{\partial}{\partial t} p(n) = |\nabla p(n)|^2 + \gamma  p(n) \Delta p(n)+ \gamma p(n) G(c)\,.
\end{align}
Hence formally we have, when $\gamma \rightarrow \infty$, that $p \rightarrow p_\infty$ with $p_\infty$ solving 
\begin{equation} \label{eqn:p00}
\left\{
\begin{array}{ll}
-\Delta p_\infty = G(c), &\quad  \mbox{in} \quad D_{\infty} (t),  \\
p_\infty=0, & \quad \mbox{on}\quad \partial D_{\infty}  (t).
\end{array}
\right.
\end{equation}
Here 
\begin{equation} \label{eqn:Dinfinity}
D_{\infty} (t)=\{ p_\infty(t)>0\}\,.
\end{equation} 
And $n$ will converge to the weak solution of 
\begin{equation}
\frac{\partial}{\partial t} n_\infty = \nabla \cdot \left( n_\infty \nabla p_\infty  \right)+n_\infty G(c)\,,
\end{equation}
wherein the limit density $n_{\infty}$ satisfies $0\leq n_\infty\leq 1$ and $n_\infty=1$ in $D_{\infty}(t)$. Note here the difference between $D$ in (\ref{eqn:D000}) and $D_{\infty}$ in (\ref{eqn:Dinfinity}): the former one is for finite $\gamma$ when $n$ and $p(n)$ have the same support, whereas the latter is when taking $\gamma$ to infinity and $n_\infty$ may have a larger support than $p_\infty$. For a general class of initial conditions, see \cite{Perthamenote}, $n_\infty$ converges to a patch function $\chi_{D_{\infty}(t)}$ as time goes on, and the velocity of the free boundary $\partial D_\infty$ is $v=-\nabla p_\infty$. In this case, the supports of $n_\infty$ and $p_\infty$ coincide. 

Then, in the \emph{in vitro} model, equation \eqref{eq:c0} becomes
\begin{align}
-\Delta c + \psi(1)c=0, & \quad \mbox{for} \, x\,\in\,D_\infty(t);  \\
c=c_B, & \quad   \mbox{for} \, x\,\in\, \{p_\infty(t)=0\}.
\end{align}
And for the \emph{in vivo} model, equation \eqref{eq:c0} becomes
\begin{equation}\label{eq:c}
- \Delta c + \psi(1) \chi_{D(t)}c=\chi_{\{ p(t)=0\}} (c_B-c).
\end{equation}

In this paper,  
three different nutrient dependence are considered: 1) $G(c)$ is a constant in the whole domain; 2) \emph{in vitro}; 3) in \emph{vivo}.
The main contribution of this paper is two-fold. One is to provide some a priori estimates of the non-negativity and global boundedness of the nonlinear parabolic-elliptic system. It is important to note that, different from the models in \cite{Perthame2014}, the cell growth is not prohibited by the contact inhibition, which is the case for tumor cells in vitro, thus there exhibits no maximum pressure, and in the nutrients models, $\Psi$ is no longer necessarily smooth functions of $n$ or $p$. Therefore, the proof of the non-negativity and global boundedness is not as straightforward as in \cite{Perthame2014}. The other is to derive some benchmark analytical solutions for multi-dimensional front dynamics, a geometric motion of the limiting free boundary model. These solutions compare favorably with the numerical solutions to cell density model, which to some extend, verity this singular limit. To solve the cell density model numerically, we adopt a recently proposed numerical scheme for sub-critical Keller Segel equations \cite{LiuWangZhou} to the tumor growth models, which is conservative in the spatial flux due to pressure, positivity preserving, and free from a nonlinear solver.

There exist other models in the literature that can connect time dynamic density model with the free boundary model, for example the threshold dynamics method introduced in \cite{Osher1994} found similar connections and is used to simulate the motion by mean curvature flow \cite{Ruuth1998,Tsai2008,Xu2016}. 
The incompressible limit of the tumor growth model is interesting not only because it provides the link between different model types, but also it provides a possible tool to simulate and approximate the free boundary problems. {In the numerical part, 2D geometric motions of the free boundary models are investigated as the limits of cell density models as $\gamma \rightarrow \infty$}. 

The organization of this paper is as follows. We prove the non-negativity and global boundedness of the cell population density model with finite $\gamma$ in section 2. Some multidimensional geometric front dynamics are derived analytically in section 3. In section 4 and 5, we introduce the adopted numerical scheme for the tumor growth models, verify the analytical results found in section 3 by simulating the cell density models, and present some worthy geometric motions of the limiting free boundary model.


\section{Properties of the PDE models}
In this section, we commence a study of various basic estimates of the solution to the tumor-nutrient models with fixed $\gamma$. The properties we will cover include the non-negativity and global boundedness of the tumor density $n$ and nutrient density $c$, and limited growth for the total mass of the tumor. 

In \cite{Perthame2014}, a general class of parabolic-parabolic systems of the tumor cell density and the nutrient concentration were studied, where the coupling functions $G$ and $\Psi$ are assumed to be smooth functions of $p$. However,  in the \emph{in vitro} model and the {in vivo} model, the nutrient density functions are governed by elliptical equations with moving boundary conditions or nonsmooth dependence on the cell density function $n$, and hence the analysis in  \cite{Perthame2014} cannot be directly extended to the models we study. 

First,  the non-negativity for $n(x,t)$ is given by the following theorem. 
\begin{theorem}\label{thm:n}
For the tumor growth model \eqref{eq:n}, if initially $n(x,0)=n_0(x)\ge0$, and $G(c)\in[0, G_m]$ for some $G_m>0$, then $n(x,t) \ge 0$ for all $t\ge0$ and $x \in \mathbb R^d$, regardless of the specific form of $G(c)$.   
\end{theorem}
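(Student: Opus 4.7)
\emph{Proof plan.}
The plan is to bound the $L^2$ norm of the negative part $n^-:=\max(-n,0)$ using the Stampacchia truncation method, exploiting the porous-medium structure of the diffusion together with the boundedness and non-negativity of $G(c)$.

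First I would rewrite \eqref{eq:n} in a form whose flux is well defined for $n$ of either sign. Using $n\nabla n^\gamma = \tfrac{\gamma}{\gamma+1}\nabla n^{\gamma+1}$ and extending $n^{\gamma+1}$ by the $C^1$ odd function $|n|^\gamma n$ (which agrees with $n^{\gamma+1}$ on $\{n\ge 0\}$), the equation reads
\begin{equation*}
\partial_t n \;=\; \tfrac{\gamma}{\gamma+1}\,\Delta\bigl(|n|^\gamma n\bigr) \;+\; nG(c).
\end{equation*}
Testing against $-n^-$ and integrating over $\mathbb{R}^d$ (assuming the usual decay at infinity), the time-derivative term yields $\tfrac12\tfrac{d}{dt}\!\int(n^-)^2\,dx$. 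Integration by parts, together with $\nabla(|n|^\gamma n)=(\gamma+1)|n|^\gamma\nabla n$ and $\nabla n^-=-\mathbf 1_{\{n<0\}}\nabla n$, turns the diffusion contribution into $-\gamma\!\int_{\{n<0\}}|n|^\gamma|\nabla n|^2\,dx \le 0$. The reaction contribution uses $n(-n^-)=(n^-)^2\mathbf 1_{\{n<0\}}$ (since $n^+n^-=0$), producing $\int G(c)(n^-)^2\,dx \le G_m\!\int(n^-)^2\,dx$.

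Discarding the nonpositive diffusion term gives the Gr\"onwall inequality $\tfrac{d}{dt}\!\int(n^-)^2\,dx \le 2G_m\!\int(n^-)^2\,dx$, so with $n_0\ge 0$ one obtains $\int(n^-)^2(x,t)\,dx \le e^{2G_m t}\!\int(n_0^-)^2\,dx = 0$, hence $n(\cdot,t)\ge 0$ a.e. for all $t\ge 0$. Note that the only property of the coupled nutrient used is the bound $0\le G\le G_m$, so the argument is indifferent to whether $c$ solves the in vitro or in vivo model; this is consistent with the statement of the theorem.

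The main obstacle is that \eqref{eq:n} is a degenerate parabolic equation whose weak solutions need not possess the regularity required to justify the integration by parts, nor a pointwise meaning of $|n|^\gamma|\nabla n|^2$ across the degeneracy set $\{n=0\}$. I would handle this by working with a non-degenerate approximation---for instance replacing $p(n)$ by a smooth strictly monotone $p_\varepsilon$ with $p_\varepsilon'\ge\varepsilon>0$, or adding a vanishing linear diffusion $\varepsilon\Delta n$---for which classical parabolic theory provides smooth solutions $n_\varepsilon$. On these approximate solutions, either the energy argument above or the standard parabolic minimum principle (noting that $n\equiv 0$ is a sub-solution of the regularized equation, as the reaction term vanishes at $n=0$) immediately gives $n_\varepsilon\ge 0$. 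Passing $\varepsilon\to 0$ via the uniform $L^p$ bounds and compactness estimates that underlie the construction of the weak solution to \eqref{eq:n} then transfers non-negativity to $n$ itself.
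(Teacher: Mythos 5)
Your argument is correct, but it takes a genuinely different route from the paper. The paper disposes of this theorem in one line: since $n\equiv 0$ is a trivial solution of \eqref{eq:n}, non-negativity follows from the comparison principle for this class of degenerate parabolic equations (citing Proposition 4.5 of \cite{Perthamenote}), and no proof is written out. You instead run a Stampacchia-type energy estimate on the negative part: the computation is sound --- the rewriting $n\nabla n^{\gamma}=\tfrac{\gamma}{\gamma+1}\nabla\bigl(|n|^{\gamma}n\bigr)$ with the odd extension, the sign of the diffusion term $-\gamma\int_{\{n<0\}}|n|^{\gamma}|\nabla n|^{2}\,dx\le 0$, the identity $n(-n^-)=(n^-)^{2}$ on $\{n<0\}$, and the Gr\"onwall step all check out --- and you correctly flag that the integration by parts must be justified on a non-degenerate approximation before passing to the limit. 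What your approach buys is a self-contained, quantitative argument that uses only $0\le G\le G_m$ and is manifestly insensitive to the nonsmooth dependence of $c$ on $n$ in the \emph{in vitro} and \emph{in vivo} models; what the paper's approach buys is brevity, at the cost of outsourcing the work to a comparison principle whose applicability to this degenerate setting must itself be taken from the literature. Note also that your parenthetical alternative --- the minimum principle on the regularized equation with $n\equiv 0$ as a sub-solution --- is essentially the paper's argument, so the two routes are closer than they first appear; the energy method is simply the version that avoids invoking comparison as a black box.
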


This result is an immediate consequence of the comparison principle  (see e.g. Proposition 4.5 in \cite{Perthamenote}) since $n(x,t)\equiv 0 $ is a trivial solution to \eqref{eq:n}, and hence we omit the proof in this work.  We emphasis that, this property is independent of the growing factor $G(c)$, and thus naturally applies to all three tumor-nutrient models we listed in the previous section.

Next, we consider the change in the total mass of $n$. With sufficient nutrient $G(c) \equiv G_0$, we immediately get the exponential growth in time. Indeed, integrate \eqref{eq:n} against $x$ over $\mathbb R^n$, we have
\[
\frac{d}{dt}\int_{\mathbb R^d}  n (x,t) dx =  G_0 \int_{\mathbb R^d}  n(x,t) dx \,, 
\]
which readily implies
\[
\|n(\cdot, t) \|_{L^1}   =  \exp(t G_{0})\|n(\cdot, 0) \|_{L^1} \,,
\]
thanks to the non-negativity of $n$ in Theorem~\ref{thm:n}. For the other two nutrient models, we need to prove the global boundedness of the nutrient functions first.  Note that the nutrient densities are self-consistently determined by the cell density with moving support, we assume that this support propagates with finite speed. It is well known that for porous media equations, the support of the density function expands with bounded speed, see, for example, \cite{AronsonCaffarelliKamin}, and similar estimates have been derived for some tumor growth models in \cite{Perthame2014}.  In the paper, due to the strong similarities to the tumor growth models in \cite{Perthame2014}, we choose to skip this proof, and focus on the estimates on the nutrient models. 
 
Having established the non-negativity of the cell density  model, we next demonstrate the boundedness on $c(x,t)$. Recall the \emph{in vitro} model in the following, 
\begin{align}
\frac{\partial}{\partial t} n =\nabla \cdot (n \nabla p(n)) +n G(c), & \quad x \in \mathbb R^d, \label{eq:ln} \\
-\Delta c + \psi(n)c=0, & \quad x \in D(t), \label{eq:lc} \\
c(x,0)=c_B>0, &  \quad x \in \mathbb R^d \backslash D(0), \label{BCc}
\end{align}
with the initial condition 
\begin{equation}
n(x,0)=n_0(x) \ge 0, \label{ICn}
\end{equation}
where $n_0(x)$ is a compactly supported function, and $D(t)$ is defined in (\ref{eqn:D000}). We now show the following lemma.  
\begin{lemma}\label{lem:c}
In the {in vitro} model \eqref{eq:ln}--\eqref{ICn}, $0 \le c(x,t) \le c_B$ for $t\ge0$ and $ x \in\mathbb R^d$.
\end{lemma}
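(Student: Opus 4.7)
I would read \eqref{eq:lc} as a linear elliptic equation for $c$ on the (bounded) set $D(t)$ with Dirichlet data $c|_{\partial D(t)} = c_B$ inherited by continuity from \eqref{BCc}, and derive both pointwise bounds from the weak maximum principle for the operator $Lu := -\Delta u + \psi(n) u$. Because the zero-order coefficient $\psi(n)$ is non-negative by \eqref{eqn:psi000}, $L$ satisfies a comparison principle, and the two constants $c_B$ and $0$ will serve as natural super- and sub-solutions.

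For the upper bound, I would compare $c$ with $c_B$: one has $L(c_B) = \psi(n) c_B \geq 0 = Lc$ in $D(t)$, while $c_B = c$ on $\partial D(t)$, so the comparison principle yields $c \leq c_B$ in $D(t)$; outside $D(t)$ the equality $c = c_B$ is built in by \eqref{BCc}. For the lower bound, the constant $0$ satisfies $L(0) = 0 = Lc$ and $0 \leq c_B = c$ on $\partial D(t)$, so $c \geq 0$.

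If one prefers to sidestep the comparison principle in view of the possibly irregular free boundary $\partial D(t)$, the same bounds follow by an energy argument. Writing \eqref{eq:lc} as $-\Delta(c - c_B) + \psi(n)(c - c_B) = -\psi(n) c_B$ in $D(t)$, testing against $w_+ := (c - c_B)_+$, and integrating by parts (the boundary term vanishes because $w_+ = 0$ on $\partial D(t)$) gives
\[
\int_{D(t)} |\nabla w_+|^2\, dx + \int_{D(t)} \psi(n) w_+^2\, dx = -\int_{D(t)} \psi(n) c_B w_+\, dx \leq 0,
\]
which forces $w_+ \equiv 0$, hence $c \leq c_B$. An identical test of \eqref{eq:lc} against $c_- := \max(-c,0)$, which also vanishes on $\partial D(t)$ since $c_B > 0$, produces $\int_{D(t)} \bigl(|\nabla c_-|^2 + \psi(n) c_-^2\bigr)\, dx = 0$ and hence $c \geq 0$.

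The only obstacle worth mentioning is admissibility of these test functions. Since $n_0$ is compactly supported and, as already noted in this section, the support of $n(\cdot,t)$ propagates with finite speed, $D(t)$ is bounded; standard elliptic regularity then places $c - c_B$ in $H^1(\mathbb R^d)$ with compact support, so $w_+$ and $c_-$ lie in $H^1_0(D(t))$ and the integration by parts above is legitimate. Beyond that, the proof is essentially a one-line application of the maximum principle.
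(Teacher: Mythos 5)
Your argument is correct, and for the lower bound it coincides exactly with the paper's proof: the paper also writes $c = c_+ - c_-$, tests the equation against $-c_-$ (which vanishes on $\partial D(t)$ since $c = c_B > 0$ there), and concludes $\int_{D(t)} \bigl(|\nabla c_-|^2 + \psi(n)c_-^2\bigr)\,dx = 0$, hence $c \ge 0$. The difference lies in the upper bound. The paper first uses the already-established non-negativity of $c$ together with $\psi(n) \ge 0$ to observe that $\Delta c = \psi(n)c \ge 0$ in $D(t)$, i.e.\ $c$ is subharmonic, and then invokes the classical maximum principle to get $c \le c|_{\partial D(t)} = c_B$. You instead treat the full operator $L = -\Delta + \psi(n)$ and compare $c$ with the supersolution $c_B$ (or, equivalently, test against $(c-c_B)_+$). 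Both are maximum-principle arguments at heart, but yours has two mild advantages: the upper bound does not depend on having proved the lower bound first, and the energy version of it sidesteps any regularity concerns about applying a pointwise maximum principle up to the free boundary $\partial D(t)$ — a point the paper glosses over. Your closing remark on the admissibility of the test functions (boundedness of $D(t)$ from finite propagation speed, so that $w_+, c_- \in H^1_0(D(t))$) is also a justification the paper leaves implicit. No gaps.
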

\begin{proof}
Write $c(x,t)= c_+(x,t) - c_-(x,t)$, where $c_\pm(x,t) \ge 0$ denote the positive part and the negative part of $c(x)$, respectively.  Notice that at the boundary $c(x,t)|_{\partial D(t)}=c_B>0$, thus $c_-(x,t)|_{\partial D(t)}=0$. Then multiply equation \eqref{eq:lc} by $-c_- (x,t)$, and integrate over $D(t)$, we have, upon integration by parts, 
\[
\int_{D(t)} |\nabla c_-|^2 + \psi(n) c^2_-  d x=0.
\]
Notice that by Theorem \ref{thm:n}, the cell density stays nonnegative and thus $\psi(n) \ge 0$. 
Then the above equation implies $c_-(x,t)=0$ in $D(t)$, and thus $c(x,t)\ge 0$ in $D(t)$. Hence,  $c(x,t)\ge 0$   in $\mathbb R^d$. 

Now from (\ref{eq:lc}), we have in $D(t)$, 
\[
\Delta c =\psi(n) c \ge 0.
\] Then, by maximum principle, we have
\[
c(x,t)|_{D(t)} \le c(x)|_{\partial D(t)} =c_B,
\]
which indicates that $c(x,t) \le c_B$ in $\mathbb R^d$. 
\end{proof}


Next we turn our attention to the \emph{in vivo} model
\begin{align}
\frac{\partial}{\partial t} n =\nabla \cdot (n \nabla p(n)) +n G(c), & \quad x \in \mathbb R^d, \label{eq:ln2} \\
-\Delta c + \psi(n)c=\chi_{\{ \mathbb R^n \backslash D(t) \}} (c_B -c), & \quad x \in \mathbb R^d, \label{eq:lc2} \\
n(x,0)=n_0(x)\ge 0,\qquad & c(\pm\infty,t)= c_B>0 , \label{ICn2}
\end{align}
Again, we assume that $n_0 $ is compactly supported. We show the boundedness of $c(x,t)$ in the following.
\begin{lemma}\label{lem:c2}
In the {in vivo} model \eqref{eq:ln2}--\eqref{ICn2},  for $t\ge0$ and $ x \in\mathbb R^d$, $0 \le c(x,t) \le c_B$.
\end{lemma}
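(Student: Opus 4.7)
The strategy is to mirror the energy argument of Lemma~\ref{lem:c} but the two new difficulties are (i) the elliptic equation is now posed on the unbounded domain $\mathbb R^d$ rather than on the moving region $D(t)$, and (ii) the ``boundary value'' $c_B$ is only imposed asymptotically as $|x|\to\infty$. The algebraic set-up I would use is to absorb the self-reaction term on the right-hand side into the coefficient, rewriting \eqref{eq:lc2} as
\[
-\Delta c + a(x)c = f(x), \qquad a(x):=\psi(n)+\chi_{\mathbb R^d\setminus D(t)}\ge 0,\quad f(x):=c_B\,\chi_{\mathbb R^d\setminus D(t)}\ge 0.
\]

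For the lower bound, decompose $c=c_+-c_-$ with $c_\pm\ge 0$. Because $c\to c_B>0$ at infinity, $c_-$ vanishes outside a bounded set, which is precisely what is needed to kill the boundary contribution in the integration by parts on $\mathbb R^d$. Multiplying by $-c_-$ and integrating, the standard identities $\nabla c\cdot\nabla c_-=-|\nabla c_-|^2$ and $c\,c_-=-c_-^2$ a.e.\ yield
\[
\int_{\mathbb R^d}\!\bigl(|\nabla c_-|^2+a(x)c_-^2\bigr)\,dx = -\int_{\mathbb R^d} f(x)\,c_-\,dx\le 0,
\]
forcing $c_-\equiv 0$.

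For the upper bound, the asymmetry of the boundary condition at infinity makes a direct application of the same trick to $c-c_B$ more natural than working with $c$ itself. Introduce $w:=c-c_B$, which satisfies $w(\pm\infty,t)=0$ and
\[
-\Delta w + a(x)w = -\psi(n)\,c_B\le 0.
\]
Now the decay of $w$ at infinity ensures $w_+\in H^1(\mathbb R^d)$, so multiplication by $w_+$ and integration by parts is legitimate, and since $\nabla w\cdot\nabla w_+=|\nabla w_+|^2$ and $w\cdot w_+=w_+^2$, we obtain
\[
\int_{\mathbb R^d}\!\bigl(|\nabla w_+|^2+a(x)w_+^2\bigr)\,dx = -c_B\int_{\mathbb R^d}\psi(n)\,w_+\,dx\le 0,
\]
so $w_+\equiv 0$ and $c\le c_B$.

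The main obstacle I anticipate is the rigorous justification of the integration by parts on $\mathbb R^d$ and the vanishing of boundary terms at infinity. For the lower bound this is trivial once one observes that $c_-$ has compact support (by the positive limit $c_B>0$ at infinity); for the upper bound, one must appeal to the fact that $c-c_B$ decays at infinity, which in turn relies on the standard far-field analysis of the elliptic equation \eqref{eq:lc2} (outside $D(t)$, $\psi(n)=0$, so $c$ solves $-\Delta c+c=c_B$, whose bounded solutions approach $c_B$ exponentially). All other ingredients---non-negativity of $\psi(n)$, of $\chi_{\mathbb R^d\setminus D(t)}$, and of the cell density $n$ from Theorem~\ref{thm:n}---are already in hand.
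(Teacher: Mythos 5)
Your lower-bound argument is exactly the paper's: the authors also decompose $c=c_+-c_-$, use $c_-(\pm\infty,t)=0$ to kill the boundary terms, multiply \eqref{eq:lc2} by $-c_-$ and integrate over $\mathbb R^d$ to get $\int_{\mathbb R^d}\bigl(|\nabla c_-|^2+\psi(n)c_-^2\bigr)dx+\int_{\mathbb R^d\setminus D(t)}\bigl(c_Bc_-+c_-^2\bigr)dx=0$, which is your identity with the reaction and source terms merely regrouped. For the upper bound, however, you take a genuinely different route. The paper argues pointwise: it supposes $\max_{\mathbb R^d\setminus D(t)}c=c_M>c_B$, locates an interior maximum $x_0$ with $\Delta c(x_0)\le 0$, derives $-\Delta c(x_0)\ge 0>c_B-c_M$ contradicting \eqref{eq:lc2}, and then handles the tumoral region separately via $\Delta c=\psi(n)c\ge 0$, the classical maximum principle on $D(t)$, and continuity of $c$ across $\partial D(t)$. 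You instead run a second energy estimate on $w=c-c_B$, whose equation $-\Delta w+\bigl(\psi(n)+\chi_{\mathbb R^d\setminus D(t)}\bigr)w=-\psi(n)c_B\le 0$ you compute correctly, and test with $w_+$ over all of $\mathbb R^d$ at once. Your version buys uniformity: it dispenses with the case split between $D(t)$ and its complement and with the appeal to continuity across the moving interface, and it sidesteps the (slightly informal) step in the paper of assuming the supremum over the unbounded exterior region is attained at an interior point rather than on $\partial D(t)$. The price is the integrability and decay of $w_+$ needed to justify the integration by parts, which you correctly flag and justify via the far-field equation $-\Delta c+c=c_B$ outside $D(t)$. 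Both arguments are sound; yours is the more symmetric and, arguably, the more robust of the two.
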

\begin{proof}
Similar to the proof in Lemma \ref{lem:c}, by Theorem \ref{thm:n}, the cell density stays nonnegative and $\psi(n) \ge 0$. 

We write $c(x,t)= c_+(x,t) - c_-(x,t)$, where $c_\pm(x,t) \ge 0$ denote the positive part and the negative part of $c(x)$, respectively. Due to assumption of $c(x,t)$ at infinity,  $c_-(\pm\infty,t)=0$. Multiply equation \eqref{eq:lc2} by $-c_- (x,t)$, and integrate over $\mathbb R^n$, we have, upon integration by parts,
\[
\int_{\mathbb R^n} \left( |\nabla c_-|^2+ \psi(n) c_-^2 \right) dx  + \int_{\mathbb R^n \backslash D(t)} \left( c_B c_- + c_-^2 \right) dx =0\,
\]
which implies $c_-(x,t)=0$ in $\mathbb R^n$, and thus $c(x,t)\ge 0$ in $\mathbb R^n$. Next, we show that there exits an upper bound for $c(x,t)$. In $\mathbb R^n \backslash D(t)$, due to the boundary conditions that $c(\pm\infty,t)=c_B$, if we further assume:  
\[
\max_{x \in \mathbb R^n \backslash D(t)} c(x,t) =c_M > c_B,
\]
then there exists $x_0 \in \mathbb R^n \backslash  D(t)$, such that
\[
\Delta c(x_0,t) \le 0, \quad \mbox{and} \quad c(x_0,t)=c_M.
\]
This implies,
\[
-\Delta c(x_0,t) \ge 0 > c_B-c_M.
\]
So equation \eqref{eq:lc2} is violated at this point. Therefore, when $x \in \mathbb R^n \backslash D(t)$, $c(x,t) \le c_B$. In $D(t)$, 
\[
\Delta c =\psi(n) c \ge 0.
\] By maximum principle, we have
\[
c(x,t)|_{D(t)} \le c(x,t)|_{\partial D(t)}.
\]
This clearly shows that, by continuity of $c(x,t)$ crossing $\partial D(t)$, $c(x,t) \le c_B$ in $\mathbb R^d$, which completes the proof.
\end{proof}

As an immediate result, in both models, we have the following estimate in the growth of the total mass, 
\begin{equation}
\|n(\cdot, t) \|_{L^1}  \le  \exp(t G_{m}) \|n(\cdot, 0) \|_{L^1},
\end{equation}
where $G_m=G(c_B)$.


\section{Explicit solutions of the Hele-Shaw models} \label{sec:analytical solutions}

Assume that $n$ starts with a characteristic function, then it is expected that it remains so when $\gamma$ goes to infinity and thus the cell density model converges to the Hele-Shaw flow \cite{Perthame2014, Perthamenote}. In this and the next sections, we would like to build a more concrete connection between these two models. Particularly, we explicitly work out the analytical solutions of the Hele-Shaw type equations for the three tumor nutrient models in this section, which will be compared with  numerical solutions to the cell density models obtained in Section \ref{sec:num}. The analytical solutions we obtain in this section will also  serve as a benchmark for our future research. 

\subsection{Radial symmetric solution with constant nutrient in multi-dimensions}
Consider the tumor growth model with infinitely sufficient nutrient
\begin{equation} \label{eqn:nG0}
\frac{\partial}{\partial t} n = \nabla \cdot \left( n \nabla p(n) \right)+n G_0.
\end{equation}
We recall here for convenience that $p(n)=n^{\gamma}$. As explained in the introduction, in the limit of $\gamma \rightarrow \infty$, we have $p \rightarrow p_\infty$, and the model formally becomes the Hele-Shaw geometric model. Specifically, it takes the form
\begin{equation} \left\{
\begin{array}{ll}
-\Delta p_\infty = G_0, &\quad  \mbox{in} \quad D_\infty (t),   \label{eq:HeleP0}\\
p_\infty=0, & \quad \mbox{on}\quad \partial D_\infty (t),
\end{array}
\right.
\end{equation}
where $D_\infty(t)=\{ p_\infty(t)>0\}$. The boundary of $D_\infty(t)$ moves speed $v=-\nabla p_\infty \cdot \hat n$ along the normal direction, where $\hat n (x,t)$ is the outer unit normal vector at the boundary. And $n_\infty$ is a weak solution to 
\begin{equation}\label{n_inf0}
\frac{\partial}{\partial t} n_\infty = \nabla \cdot \left( n_\infty \nabla p_\infty  \right)+n_\infty G_0.
\end{equation}

In what follows, we confine ourselves to the radial symmetric case and derive the analytical solutions explicitly for several specific examples. Let $r$ be the radial variable; then (\ref{eq:HeleP0}) rewrites
\begin{equation} \label{eqn:TN-r}
\partial_t n = \frac{1}{r^{d-1}} \frac{\partial}{\partial r} \left( n r^{d-1}  \frac{\partial }{ \partial r} p \right) + nG_0 \,,
\end{equation}
and (\ref{eq:HeleP0}) becomes 
\begin{equation} \label{eqn:Hele-r}
-\frac{1}{r^{d-1}} \frac{d}{d r} \left( r^{d-1} \frac{d}{d r} p_\infty \right)=G_0 \,,
\end{equation}
and the expansion speed takes the form
\[
v = -\frac{\partial p_\infty }{\partial r} \hat{r} \cdot \hat{n}\,.
\]
Here both $n$ and $p$ now depend on $r$, and $d$ denotes the dimension.

\vspace{0.5cm}
\hspace{-0.65cm}{\bf Example 1: An expanding ball.\,}
We choose the initial condition to be the characteristic function of a ball with radius $R_0$ centered at origin, i.e.,
\[
n(x,0)=\chi_{B_{R_0}}.
\]
then it is expected that as $\gamma$ goes to infinity,  $n(x,t)$ converges to $n_\infty (x,t)=\chi_{B_{R(t)}}$. Now it amounts to determine how $R(t)$ changes with time. We will explore this dynamics in the viewpoint of both the tumor growth model and the limiting Hele-Shaw flow model, and show that they both lead to the same expansion speed for the tumor. 

Firstly, in the tumor growth model (\ref{n_inf0}), we integrate both sides over $\mathbb R^d$, and denote 
$m(t)= \int_{\mathbb R^d} n_\infty (x,t) d x$, then we get $m(t)= m(0) e^{G_0 t}$.
With the radial symmetric assumption, this solution implies
\begin{equation} \label{eqn:R-ball}
R(t)= R(0) e^ {  {G_0 t}/{d}}\,,
\end{equation}
which leads to the expansion speed
\begin{equation} \label{eqn:V-ball}
\partial_t R = \frac{G_0}{d} R.
\end{equation}

On the other hand, for the Hele-Shaw flow model (\ref{eqn:Hele-r}), we see that
\[
 \frac{d}{d r} \left( r^{d-1} \frac{d}{d r}P\right) = - r^{d-1}G_0.
\]
Integrate it with respect to $r$ from $0$ to $R$, one gets
\[
R^{d-1}\frac{d}{d R}P= - \frac{1}{d} R^d G_0,
\]
which implies 
\begin{equation} \label{eq:PR}
\frac{d}{d R}P = - \frac{G_0}{d} R .
\end{equation}
Therefore, the expansion speed is
\begin{equation*}
v=-\nabla p_\infty \cdot \hat n= - \frac{d P}{d R}= \frac{G_0}{d} R\,,
\end{equation*}
which agree with the speed (\ref{eqn:V-ball}) derived from the dynamical tumor growth model. 

Also, we conclude from \eqref{eq:PR} that 
\[
P= -\frac{G_0}{2d} R^2+a,
\]
and the integration constant $a$ can be determined by the fact that $P(R(t))=0$, and thus
\begin{equation*}
P(r)=-\frac{G_0}{2d} r^2 + \frac{G_0}{2d} R(t)^2, \quad r\le R(t).
\end{equation*}

\vspace{0.5cm}
\hspace{-0.7cm} {\bf Example 2: a single-annulus in dimension 2.\,} As the second example, we consider the case when $D_\infty(t)$ has an annulus shape with inner radius $r_-$ and outer radius $r_+$. In this case, we can not derive the speed for the two boundaries from the tumor growth model but only from the limit Hele-Shaw flow model. Recall (\ref{eqn:Hele-r}), then the solution $p_\infty(r)$ takes the form
\begin{equation} \label{eqn:p-sa}
p_\infty(r)=\left\{ 
\begin{array}{ll}
-\frac{G_0}{4} r^2+a \ln r + b, & d=2 ;\\
-\frac{G_0}{2d} r^2+ \frac{a}{2-d} {r^{2-d}}+b, & d \ge 3 .
\end{array}
\right.
\end{equation}
Here both $a$ and $b$ will be determined by the fact that $p_\infty(r_-)=p_\infty(r_+)=0$. In particular, 
when $d=2$, we have
\begin{equation*}
a=G_0\frac{ r_+^2 - r_-^2}{4 \left( \ln r_+ - \ln r_- \right)}\,, \qquad b= -G_0\frac{r_+^2 \ln r_- - r_-^2 \ln r_+}{4\left( \ln r_+ - \ln r_- \right)}.
\end{equation*}
The case with $d>3$ can be derived in exactly the same manner and we omit its detailed form in this paper. To lighten the notation, we let $m$ denote the total mass
\[
m = B_d(r_+)- B_d(r_-)\,,
\]
where $B_d(r) = \frac{\pi ^{d/2}}{\Gamma \left( \frac{d}{2} +1\right) } r^d$ is the volume of a ball in $\mathbb R^d$ with radius $r$. Then one sees from equation (\ref{eqn:TN-r}), upon integrating in $\mathbb R^d$, that
\begin{equation} \label{eqn:mt-sa}
m(t) = m(0) e^{G_0 t} \,.
\end{equation}
In $d=2$, $m$ simply reduces to $m = \pi (r_+^2 - r_-^2)$. 

Given the form of $p_\infty$ in (\ref{eqn:p-sa}), one immediately gets the moving speed. Specifically, at the inner boundary, we have 
\begin{equation*}
v\big|_{r=r_-} = -p'_\infty \hat{n} \big|_{r_-} = p'_\infty (r_-)=  -\frac{G_0} {2} r_-  + \frac{G_0 m}{4 \pi r_- \left( \ln r_+ - \ln r_- \right)}\,;
\end{equation*}
whereas in the outer boundary, we have 
\begin{equation*}
v\big|_{r=r_+} =-p'_\infty \hat{n} \big|_{r_+}  =  - p'_\infty(r_+)=  \frac{G_0}{ 2} r_+ -\frac{G_0 m}{4 r_+ \pi \left( \ln r_+ - \ln r_- \right)}.
\end{equation*}
Note carefully here that the inner boundary moves at speed $v\big|_{r_-}$ in the negative direction along the radius, and the outer boundary moves at the speed of $v\big|_{r-+}$ in the positive direction along the radius. Therefore, we have the following results concerning the change in radius $r_+$ and $r_-$:
\begin{align}
\partial_t r_- & =- v|_{r_-} =  \frac{G_0}{ 2} r_- - \frac{G_0 m}{4 \pi r_- \left( \ln r_+ - \ln r_- \right)},
\label{eqn:r-sa} \\
\partial_t  r_+ & = v|_{r_+}=  \frac{G_0}{ 2} r_+ - \frac{G_0 m}{4 r_+ \pi \left( \ln r_+ - \ln r_- \right)}.
\label{eqn:r+sa}
\end{align}
Moreover, one can easily check that 
\[
\partial_t  m = 2 \pi \left(r_+ \dot r_+ - r_- \dot r_- \right) =2 \pi \frac{G_0}{2} (r^2_+-r^2_-)=G_0 m \,,
\]
which recovers the exponential growth of the total mass as displayed in (\ref{eqn:mt-sa})

\vspace{0.5cm}
\hspace{-0.7cm} {\bf Example 3: a double-annulus in dimension 2.\,} In this example, we extend the single annulus into a double annulus shape with four boundaries $r_1$, $r_2$, $r_3$ and $r_4$, where $r_1< r_2$ characterize the inner annulus and $r_3 < r_4$ defines the outer annulus. Then similar to the previous example, we can only compute the front propagation speed via the limit model (\ref{eqn:Hele-r}). Indeed, from (\ref{eqn:p-sa}), one has 
\[
p_\infty(r) = -\frac{r^2}{4} G_0 + a \ln r + b\,,
\]
where $a$ and $b$ are determined by the boundary conditions. Specifically, for the inner annulus, the boundary conditions are 
\[
p(r_1) = p(r_2) = 0\,,
\]
which leads to 
\begin{equation*}
p_\infty(r) =  -\frac{r^2}{4} G_0 + G_0 \frac{r_2^2 - r_1^2}{4 (\ln r_2 - \ln r_1)} \ln r  - G_0 \frac{r_2^2 \ln r_1 - r_1^2 \ln r_2}{4(\ln r_2 - \ln r_1)}   \qquad r_1 \leq r \leq r_2\,.
\end{equation*}
Therefore, $r_1$ and $r_2$ change according to the following two equations
\begin{align*}
\partial_t r_1 & =- v|_{r_1} = -p_\infty'(r_2)  =  \frac{G_0}{ 2} r_1 - \frac{G_0 (r_2^2 - r_1^2)}{4 r_1 \left( \ln r_2 - \ln r_1 \right)}, \\
\partial_t  r_2 & = v|_{r_2}=  -p_\infty'(r_2) =  \frac{G_0}{ 2} r_2 - \frac{G_0(r_2^2 - r_1^2)}{4 r_2  \left( \ln r_2 - \ln r_1\right)}.
\end{align*}
Likewise, $r_3$ and $r_4$ satisfy the following equation
\begin{align*}
\partial_t r_3 & =- v|_{r_3} = -p_\infty'(r_3)  =  \frac{G_0}{ 2} r_3 - \frac{G_0 (r_4^2 - r_3^2)}{4 r_3 \left( \ln r_4 - \ln r_3 \right)}, \\
\partial_t  r_4 & = v|_{r_4}=  -p_\infty'(r_4) =  \frac{G_0}{ 2} r_4 - \frac{G_0(r_4^2 - r_3^2)}{4 r_4  \left( \ln r_2 - \ln r_3\right)}.
\end{align*}
And $p_\infty(r)$ for $r \in [r_3, r_4]$ takes the form 
\begin{equation*}
p_\infty(r) =  -\frac{r^2}{4} G_0 + G_0 \frac{r_4^2 - r_3^2}{4 (\ln r_4 - \ln r_3)} \ln r  - G_0 \frac{r_4^2 \ln r_3 - r_3^2 \ln r_4}{4(\ln r_4 - \ln r_3)}   \qquad r_3 \leq r \leq r_4\,.
\end{equation*}


\subsection{1D radial symmetric model with linear growth function }
In this section, we assume that the growing factor $G(c)$ is a linear function in $c$
\begin{equation}\label{eqn:G-linear}
G(c) = G_0 c\,, \quad G_0>0
\end{equation}
so that it satisfies the conditions (\ref{eqn:G000}). Then the tumor growth model (\ref{eq:n}) in 1D reduces to 
\begin{equation*}
\partial_t n = \partial_x  \left( n \partial_x p(n) \right)+ G_0 c n\,, \quad p(n) = n^\gamma\,.
\end{equation*}
In the limit of $\gamma \rightarrow \infty$, we have the limit density $n_\infty$ solving
\begin{equation*}
\frac{\partial}{\partial t} n_\infty = \partial_x  \left( n_\infty \partial_x p_\infty  \right)+n_\infty G_0 c.
\end{equation*}
and $p_\infty$ in (\ref{eqn:p00}) satisfying 
\begin{equation}  \label{eq:HeleP}
\left\{
\begin{array}{ll}
-\partial_{xx} p_\infty = G_0c, &\quad  \mbox{in} \quad D_\infty (t),   \\
p_\infty=0, & \quad \mbox{on}\quad \partial D_\infty (t),
\end{array}
\right.
\end{equation}
where $D_\infty(t)=\{ p_\infty(t)>0\}$. The free boundary of $D$ moves with normal velocity 
\begin{equation}\label{eqn:speed}
v=- \partial_x p_\infty \cdot \hat n
\end{equation}
with $\hat n (x,t)$ being the unit outer normal vector to the boundary. In the following two examples, we derive the analytical solutions for the limiting models obtained from two different cases: {\it in vitro} and {\it in vivo}.

\vspace{0.5cm}
\hspace{-0.7cm}{\bf Example 4: 1D {\it in vitro} model.\,} In the 1D \emph{in vitro} models, equations \eqref{eqn:c0-invitro} \eqref{eqn:c1-invitro} become
\begin{align*}
-\partial_{xx} c + \psi(1)c=0, & \quad \mbox{for} \, x\,\in\,D_\infty(t);  \\
c=c_B, & \quad   \mbox{for} \, x\,\in\, \mathbb R \backslash D_\infty(t) \,,
\end{align*}
and  we have formally assumed 
 that on $D_\infty(t)$, $n \equiv 1$ if initially $n$ is a characteristic function \cite{Perthame2014, Perthamenote}.  

Now assume $\psi(n)=n$ for simplicity, then at a certain time $t$ (we hereafter suppress the $t$ dependence whenever it does not cause any confusion), we have 
\[
 - \partial_{xx} c + c =0, \qquad x \in [-R(t),R(t)]\,.
\]
Since $c$ is symmetric with respect to the origin, we have $\partial_x c(0) =0$, which implies that
\[
c(x)= a \cosh(x).
\] 
Here $a$ is obtained from the boundary condition $c(\pm R(t))=c_B$:
\[
a= \frac{c_B}{\cosh(R(t))}.
\]
Putting together, we have 
\[
c= \left\{
\begin{array}{ll}
\frac{c_B }{\cosh(R(t))} \cosh(x), & x \in [ -R(t),R(t)]; \\
c_B, & x \notin   [ -R(t),R(t)] .
\end{array}
\right.
\]
To proceed, plugging the above solution for $c$ into the $p_\infty$ equation (\ref{eq:HeleP}), we get
\[ 
-\partial_{xx} p_\infty = G_0c = \frac{c_B G_0}{\cosh(R(t))} \cosh(x), \qquad x \in [ -R(t),R(t)]\,,
\]
whose general solution is given by
\[
p_\infty = - \frac{c_B G_0}{\cosh(R(t))} \cosh(x) + ax+b.
\]
Again, by symmetry, one has $\partial_x p(0)=0$, which leads to $a=0$. Then the boundary condition $p_{\infty}(\pm R(t))=0$ gives rise to $b=c_B G_0$. Therefore, we have
\begin{equation} \label{eqn:p-1D-invitro}
p_\infty= \left\{
\begin{array}{ll}
-\frac{c_B G_0 }{\cosh(R(t))} \cosh(x) + c_B G_0, & x \in [ -R(t),R(t)]; \\
0, & x \notin   [ -R(t),R(t)] .
\end{array}
\right.
\end{equation}
Then the propagation speed of the $R(t)$ can be obtained using (\ref{eqn:speed})
\[
v(R(t)) = -p'_\infty (R(t))= c_B G_0 \tanh (R(t)).
\]
and thus 
\begin{equation} \label{eqn:R-invitro}
\partial_t R(t) =c_B G_0 \tanh (R(t))\,.
\end{equation}
As $R(t) \rightarrow \infty$, one sees that the limiting speed is $c_B G_0$.

\vspace{0.5cm}
\hspace{-0.7cm}{\bf Example 5: 1D {\it in vivo} model.\, } We now repeat the calculation for the \emph{in vivo} model, in which nutrient varies according to 
\begin{equation*}\label{eq:c}
- \partial_{xx} c + \psi(1) \chi_{D(t)}c=\chi_{\{ p(t)=0\}} (c_B-c).
\end{equation*}
With the same assumptions as in the previous example, we have at a certain time $t$, 
\[
 - \partial_{xx} c + c =0, \qquad x \in [-R(t), R(t)]\,.
\]
Along with $\partial_x c(0) =0$ that comes from the symmetric assumption, we get
\[
c(x)= a_0 \cosh(x).
\] 
Now comes the difference from the previous example: we cannot specify the constant $a_0$ with the boundary condition. Instead, we have
\[
-\partial_{xx}c = c_B -c, \qquad  x>R(t)\,,
\]
whose general solution is given by 
\[
c=c_B + a_1 e^{-x}+ a_2 e^{x}.
\]
With the far field assumption $c \rightarrow c_B$ as $x \rightarrow \pm \infty$, we obtain $a_2=0$. Then by the continuity of both $c$ and $\partial_x c$ at $x=R(t)$, we get
\[
a_0 = \frac{c_B}{e^{R(t)}},\quad a_1=-c_B \sinh(R(t)).
\]
In summary, 
\[
c= \left\{
\begin{array}{ll}
\frac{c_B }{e^{R(t)}} \cosh(x), & x \in [ -R(t),R(t)]; \\
c_B -c_B \sinh(R(t)) e^{-|x|}, & x \notin   [ -R(t),R(t)] .
\end{array}
\right.
\]
As before, plugging the expression of $c$ into (\ref{eq:HeleP}) to get $p_\infty$
\[ 
-\partial_{xx} p_\infty = G_0c = \frac{c_B G_0}{e^{R(t)}} \cosh(x), \quad x\in [-R(t),R(t)]
\]
whose general solution is given by
\[
p_\infty = - \frac{c_B G_0}{e^{R(t)}} \cosh(x) + ax+b.
\]
Then symmetry implies $\partial_x p(0)=0$, which further leads to $a=0$. And the boundary condition $p_{\infty}(\pm R(t))=0$ implies $b=c_B G_0 \cosh(R(t)) e^{-R(t)}$. Altogether, we get
\begin{equation}  \label{eqn:p-invivo}
p_\infty= \left\{
\begin{array}{ll}
-\frac{c_B G_0 }{e^{R(t)}} \cosh(x) + \frac{c_B G_0 }{e^{R(t)}} \cosh(R(t)) , & x \in [ -R(t),R(t)]; \\
0, & x \notin   [ -R(t),R(t)] .
\end{array}
\right.
\end{equation}
And the propagation speed of $R(t)$ is obtained by direct calculation
\begin{equation*} 
v(R(t)) = -p'_\infty (R(t))=c_B G_0 \frac{\sinh(R(t))}{e^{R(t)}}= c_B G_0 \frac{\cosh(R(t))}{e^{R(t)}} \tanh (R(t)) \le c_B G_0 \tanh (R(t)) \,,
\end{equation*}
and thus 
\begin{equation} \label{eqn:R-invivo-0}
\partial_t R(t) =c_B G_0 \frac{\sinh(R(t))}{e^{R(t)}},
\end{equation}
In view of the above result, we notice that the propagation speed in the \emph{in vivo} model is slower than that in the \emph{in vitro} model. Moreover, as $R(t) \rightarrow \infty$, the limiting speed is $\frac{1}{2} c_B G_0$, which is a half of the limiting speed in the {\it in vitro} model.

\subsection{2D radial symmetric model with linear growth}
As in the last section, we consider linear growth function (\ref{eqn:G-linear}) but in 2D radial symmetric case. Then (\ref{eq:n}) simplifies to 
\begin{equation*}
\partial_t n =  \frac{1}{r} \partial_r \left( nr \partial_r p(n)\right) + G_0 c n, \qquad p(n) = n^\gamma\,
\end{equation*}
and its limit reads
\begin{equation*}
\partial_t n_\infty = \frac{1}{r}\partial_r \left( r\partial_r p_\infty \right) + n_\infty G(c)\,,
\end{equation*}
where $p_\infty$ satisfies 
\begin{equation} \label{eqn:limit25}
\left\{ \begin{array}{cc} -\frac{1}{r}\left( r \partial_r  p_\infty \right) = G(c)  & \quad   \text{ in } \quad D(t) \,,
\\ p_\infty = 0 & \quad \text{ on } \quad \partial D(t)\,. \end{array} \right.
\end{equation}
The equation for $c$ varies depending on the model we considered. In the following two examples, we provide analytical solution for the limiting system.

\vspace{0.5cm}
\hspace{-0.65cm}{\bf Example 6: 2D radial symmetric {\it in vitro} model.\, } 
In the {\it in vitro} model, we have 
\begin{align}
-\frac{1}{r} \partial_r(r \partial_r c) + \psi(1)c=0, & \quad \mbox{for} \, x\,\in\,D(t);   \\
c=c_B, & \quad   \mbox{for} \, x\,\in\, \mathbb R^2 \backslash D(t).
\end{align}
For simplicity, we use $\psi(n) = n$ from now on. We also assume that the initial density $n$ is a characteristic function with radial symmetry, i.e., $n_\infty(x,0) = \chi_{B_{R_0}}$, and we expect the density remains a characteristic function with a moving boundary $n_\infty = \chi_{B_{R(t)}}$.

For fixed $t$, (we thus suppress the $t$ dependence in the calculation in the following) when $x \in B_{R(t)}$, we have 
\[
 - \frac{1}{r}  \partial_{r} ( r \partial_r c ) + c =0.
\]
The boundedness of $c$ at $r=0$ implies the following general solutoin
\[
c(r)= a I_0(r), 
\] 
where $I_m (r)$ is the modified Bessel function of the first kind.
The undetermined coefficient $c$ comes from the boundary condition at $c( R(t))=c_B$, which leads to
\[
a= \frac{c_B}{I_0 (R(t))}.
\]
Therefore, 
\[
c= \left\{
\begin{array}{ll}
\frac{c_B }{I_0 (R(t))} I_0 (r), & r \in [0,R(t)]; \\
c_B, & r > R(t) .
\end{array}
\right.
\]
To proceed, plugging the solution $c$ into (\ref{eqn:limit25}), then we have, for $x \in B_{R(t)}$
\[ 
-\frac{1}{r}  \partial_{r} ( r \partial_r p_\infty )  =\frac{c_B G_0}{I_0 (R(t))} I_0 (r),
\]
whose general solution is given by
\[
p_\infty = - \frac{c_B G_0}{I_0 (R(t))} I_0 (r)+ a \ln r+b.
\]
The boundedness of $p_\infty$ at $r=0$ implies $a=0$, and the boundary condition $p_{\infty}(\pm R(t))=0$ implies $b=c_B G_0$. In sum, we get
\[
p_\infty= \left\{
\begin{array}{ll}
 - \frac{c_B G_0}{I_0 (R(t))} I_0 (r)+ c_B G_0, & r \in [0,R(t)]; \\
0, & r > R(t).
\end{array}
\right.
\]
Then the propagation speed of the $R(t)$ is
\begin{equation*}
v(R(t)) = -p'_\infty (R(t))= c_B G_0  \frac{I_1 (R(t))}{I_0 (R(t))}\,,
\end{equation*}
and thus 
\begin{equation}\label{eqn:Rtt-vitro}
\partial_t R(t) = c_B G_0  \frac{I_1 (R(t))}{I_0 (R(t))}\,.
\end{equation}
Note that limiting speed is $c_B G_0$ as $R(t) \rightarrow \infty$.

\vspace{0.5cm}
\hspace{-0.65cm}{\bf Example 7: 2D radial symmetric {\it in vivo} model.\, } 
We now repeat the calculation for the \emph{in vivo} model: 
\begin{equation*}\label{eq:c}
 - \frac{1}{r}  \partial_{r} ( r \partial_r c ) + \psi(1) \chi_{D(t)}c=\chi_{\{ p(t)=0\}} (c_B-c).
\end{equation*}
With the same assumptions as in the previous section,  
for fixed $t$, and when $x \in B_{R(t)}$, we have 
\[
  - \frac{1}{r}  \partial_{r} ( r \partial_r c )  + c =0.
\]
The boundedness of $c$ at $r=0$ implies the following solution,
\[
c(x)= a_0 I_0 (r).
\] 
However, unlike the previous case, we can not specify the constant $a_0$ with the right boundary condition. Instead, we have, for $x>R(t)$,
\[
 - \frac{1}{r}  \partial_{r} ( r \partial_r c ) = c_B -c,
\]
and thus the general solution is given by
\[
c=c_B + a_1 K_0(r)+ a_2 I_0(r),
\]
where $K_m(r)$ denotes the modified Bessel function of the second kind. 
With the far field assumption $c \rightarrow c_B$ as $x \rightarrow \pm \infty$, we know $a_2=0$. By continuity of $c$ and $\partial_x c$ at $x=R(t)$, we get
\[
a_0 = \frac{c_B K_1(R)}{K_0(R)I_1(R)+K_1(R)I_0(R)} ,\quad a_1=-\frac{c_B I_1(R)}{K_0(R)I_1(R)+K_1(R)I_0(R)} .
\]
Therefore,
\[
c= \left\{
\begin{array}{ll}
\frac{c_B K_1(R)}{K_0(R)I_1(R)+K_1(R)I_0(R)}  I_0(r), & r \in [0,R]; \\
c_B -\frac{c_B I_1(R)}{K_0(R)I_1(R)+K_1(R)I_0(R)} K_0(r), & r>R .
\end{array}
\right.
\]
Plugging it to (\ref{eqn:limit25}), then for $x \in B_{R(t)}$, we have 
\[ 
-\frac{1}{r}  \partial_{r} ( r \partial_r p_\infty )  = G_0c =\frac{c_B G_0 K_1(R)}{K_0(R)I_1(R)+K_1(R)I_0(R)}  I_0(r),
\]
whose general solution is given by
\[
p_\infty = - \frac{c_B G_0 K_1(R)}{K_0(R)I_1(R)+K_1(R)I_0(R)}  I_0(r)+ a \ln r+b.
\]
The boundedness of $p_\infty$ at $r=0$ implies $a=0$, and the boundary condition $p_{\infty}(\pm R(t))=0$ indicates 
\[
b=\frac{c_B G_0 K_1(R)I_0(R)}{K_0(R)I_1(R)+K_1(R)I_0(R)}  .
\] 
In sum, we get
\[
p_\infty= \left\{
\begin{array}{ll}
 - \frac{c_B G_0 K_1(R)}{K_0(R)I_1(R)+K_1(R)I_0(R)}  I_0(r)+ \frac{c_B G_0 K_1(R)I_0(R)}{K_0(R)I_1(R)+K_1(R)I_0(R)} , & x \in [ -R(t),R(t)]; \\
0, & x \notin   [ -R(t),R(t)] .
\end{array}
\right.
\]
By direct calculation, we find the front propagation speed
\[
-p'_\infty (R(t))=c_B G_0 \frac{K_1(R)I_1(R)}{K_0(R)I_1(R)+K_1(R)I_0(R)} \le c_B G_0 \frac{K_1(R)I_1(R)}{K_1(R)I_0(R)} = c_B G_0 \frac{I_1(R)}{I_0(R)}\,
\]
which implies that the speed in the \emph{in vivo} model is slower than that in the \emph{in vitro} model. And the limiting speed is $\frac{1}{2} c_B G_0$ as $R(t) \rightarrow \infty$. Finally, we write
\begin{equation} \label{eqn:Rtt-vivo}
\partial_t R(t) =c_B G_0 \frac{K_1(R)I_1(R)}{K_0(R)I_1(R)+K_1(R)I_0(R)} \,.
\end{equation}

\section{Numerical method} \label{sec:numeric}
In this section, we discuss the numerical method for the cell density equations (\ref{eq:n}). Our goal is to obtain a numerical approximation to the cell density model with big $\gamma$ such that it can be compared with the analytical solution derived in the last section to the limiting Hele-Shaw flow. Note that a direct simulation of the cell density model can be very challenging due to the high nonlinearity and degeneracy, in which case the space and time steps have to be small enough to overcome the numerical error or instability induced by large $\gamma$. 


Here we adopt the numerical methods for sub-critical Keller-Segel equations proposed in \cite{LiuWangZhou} to the tumor growth models, which is positivity preserving and conservative when $G=0$, so that it can handle the moving transient front nicely with correct growth in total mass. Besides, it uses a semi-implicit discretization in time so that it is free from nonlinear solvers. More specifically, we consider a 2D case in the following without loss of generality. Denote
\[
M=\exp\left(-n^\gamma \right),
\]
then equation \eqref{eq:n} can be formulted as 
\begin{align*}
\frac{\partial}{\partial t} n &=  \nabla \cdot \left[ n M \nabla \frac{1}{M} \right]+n G(c) \\
&=\nabla \cdot \left[ n M \nabla \frac{n}{nM} \right]+n G(c) \,,
\end{align*}
which can be solved by a semi-discrete semi-implicit scheme 
\begin{align}
\frac{n^{k+1}-n^k}{\Delta t} &=\nabla \cdot \left[ n^k M^k \nabla \frac{n^{k+1}}{n^k M^k} \right]+n^{k+1} G(c^{k+1}), \label{eq:numn}\\
- \Delta c^{k+1} & =- \Psi \left(n^k,c^{k+1}\right)\,. \label{eq:numc}
\end{align}
Here the superscript $k$ stands for the numerical solution at $t=t_k=k\Delta t$. Notice that one can solve for $c^{k+1}$ first from \eqref{eq:numc}, and then solve for $n^{k+1}$ from \eqref{eq:numn}, and thus no nonlinear solver is needed as long as $\Psi(n,c)$ is linear in $c$. Clearly, the three models that we have studied satisfies this condition. 

For spatial discretization, we notice that a standard five point discretization of (\ref{eq:numc}) guarantees boundedness of numerical approximations of $c$. That being said, if we denote the fully discrete approximation of $c$ at  $(x_i,y_j, t_k)$  by $c^k_{i,j}$, then we have \[
0\le c^k_{ij} \le c_B.\]
Note that, equation  \eqref{eq:numn} can be reformulated as 
\begin{equation} \label{eq:nre}
\left(1- \Delta t  G(c^{k+1}) \right) n^{k+1}-\nabla \cdot \left[ n^k M^k \nabla \frac{n^{k+1}}{n^k M^k} \right]=n^k.
\end{equation}
Clearly, if $\Delta t$ satisfies the following condition
\begin{equation}
\Delta t < \min_x\{1/G(c^{k+1})\}=\frac{1}{G(c_B)}=\frac{1}{G_m},
\end{equation}
the left hand side of \eqref{eq:nre} is a positive definite operator of $n^{k+1}$. Therefore, as long as the spatial discretization can preserve this property, such as the symmetric framework in \cite{JinYan, JinWang}, the fully discrete numerical scheme is positivity preserving.

In the radial symmetric case, let $r$ be the radius, the system changes to
\begin{align*}
&\partial_t n = \frac{\gamma}{\gamma +1 } \frac{1}{r} \frac{\partial }{\partial r} \left(r \frac{\partial}{\partial r} n^{\gamma+1} \right) +n G(c), \\
&- \frac{1}{r} \frac{\partial }{\partial r} \left(r \frac{\partial}{\partial r} c \right) = -\Psi \left( n,c \right).
\end{align*}
Again, denote
\[
M=\exp\left(-n^\gamma \right),
\]
we can reformulate
\begin{align*}
\partial_t n & =\frac{1}{r} \frac{\partial }{\partial r} \left(r n M \frac{\partial}{\partial r} \frac{1}{M} \right) +n G(c) \\
 & =\frac{1}{r} \frac{\partial }{\partial r} \left(r n M \frac{\partial}{\partial r} \frac{n}{nM} \right) +n G(c). 
\end{align*}
Therefore, the corresponding semi-discrete semi-implicit scheme becomes
\begin{align}
&\frac{n^{k+1}-n^k}{\Delta t}  = \frac{1}{r} \frac{\partial }{\partial r} \left(r n^k M^k \frac{\partial}{\partial r} \frac{n^{k+1}}{n^k M^k} \right) +n^{k+1} G(c^{k+1}), \\
&- \frac{1}{r} \frac{\partial }{\partial r} \left(r \frac{\partial}{\partial r} c^{k+1} \right)  =-\Psi \left( n^k,c^{k+1} \right).
\end{align}
Similar analysis can be applied to the radial symmetric case. The readers can refer to \cite{LiuWangZhou} for a more general discussion.

In the rest of this section, we provide a heuristic explanation of what conditions a scheme for the cell density model should satisfy such that it can capture its front speed correctly for large $\gamma$. We use the Lax-Wendroff type argument. To explain, let us consider the following model problem:
\begin{equation} \label{eq:model}
\partial_t n + \partial_x f[n]=g[n]\,,
\end{equation}
where $n(x,t)$ is the density function,  $x \in \mathbb R$ and $t \ge 0$. The flux function $f$ and the growth factor $g$ may depend on functions of  $n$, nonlocal transform of $n$ and their spacial derivative. A weak form of (\ref{eq:model}) reads
\begin{equation}
\int_ 0^{\infty} dt \int_{\mathbb R} dx  \left( \phi_t n + \phi_x f  + \phi g \right) =0\,, 
\end{equation}
where $\phi$ is a smooth test function in $\mathbb R \times [0,\infty)$ with compact support.

The numerical scheme is represented as
\begin{equation} \label{eq:scheme}
\rho^{k+1}_j=\rho^{k}_j - \frac{\tau}{h} [ F_j(n^k,n^{k+1})-F_{j-1}(n^k,n^{k+1})] + k G_j(n^k,n^{k+1}),
\end{equation}
with $\tau$, $h$ being respectively the time and space steps. To lighten the notations, we denote $F_j^k=F_j(n^k,n^{k+1})$, $G_j^k=G_j(n^k,n^{k+1})$. Multiply \eqref{eq:scheme} by $\phi^k_j := \phi (x_j,t^k)$, and sum over $j \in \mathbb Z$ and $k \in \mathbb N$, and we get 
\[
\sum_{k=0}^{\infty} \sum_{j=-\infty}^{\infty} \phi_j^k (n^{k+1}_j - n^k_j)=- \frac{\tau}{h}  \sum_{k=0}^{\infty} \sum_{j=-\infty}^{\infty}  \phi_j^k (F^{k}_j - F^k_{j-1}) + \tau \sum_{k=0}^{\infty} \sum_{j=-\infty}^{\infty}  \phi_j^k G^k_j.
\]
With summation by parts, we obtain
\begin{equation} \label{eq:disc}
-\sum_{k=0}^{\infty} \sum_{j=-\infty}^{\infty} (\phi_j^k -\phi_j^{k-1})  n^k_j=  \frac{\tau}{h} \sum_{k=0}^{\infty} \sum_{j=-\infty}^{\infty} ( \phi_{j+1}^k- \phi_j^k  )F^{k}_j + \tau \sum_{k=0}^{\infty} \sum_{j=-\infty}^{\infty}  \phi_j^k G^k_j.
\end{equation}

Consider a family of discretization parameter sets $\{\tau_l, h_l\}_{l\in \mathbb N}$. We assume that $\tau_l \rightarrow 0$ and $h_l \rightarrow 0$ as $l \rightarrow \infty$. Denote the piecewise constant reconstruction of the solution by $\tilde{n}_l(x,t)$, and we assume that as $l \rightarrow \infty$, $\tilde{n}_l$ converges to a piecewise smooth function $\widetilde n$. Moreover, we assume the piecewise constant construction of the flux $F_l$ and the growth $G_l$ converge to $f(\widetilde n)$ and $g(\widetilde n)$. Then, \eqref{eq:disc} implies, as $l \rightarrow \infty$,
\[
\int_ 0^{\infty} dt \int_{\mathbb R} dx  \left(- \phi_t \widetilde n - \phi_x f (\widetilde n) + \phi g ( \widetilde n) \right) =0. 
\]
This means, if the numerical solutions converge and the flux functions and growth functions converge consistently, the numerical solutions converge to the weak solution of the model equation. Then by standard argument, if the numerical solution converges to discontinuous solution at $X(t)$, the propagation of the discontinuity is governed by
\[
\dot X(t) = \frac{[f(\widetilde n)]}{[ \widetilde n]},
\]
where $[s]$ denotes the jump of $s$ at the discontinuity. 

It is interesting to apply the above result to a simple 1D case of (\ref{eq:n}), wherein we denote $\tilde{n}_\gamma$ the limit of the numerical approximation in the vanishing mesh size limit. Then sending $\gamma \rightarrow \infty$, we expect that, for a general class of initial conditions, 
\[
\widetilde n_{\gamma} \rightarrow \widetilde n_\infty =\chi_{D_\infty(t)},
\]
where $D_\infty(t)$ is defined in (\ref{eqn:Dinfinity}). 
Without loss of generality, we look at the right endpoint of $D(t)$ and obtain
\[
[n_{\infty}]=-1, \quad [ - n_{\infty} \partial_x p ]= \partial_x p,
\]
where $\partial_x p$ is understood as the sided limit of $\partial_x p$ from the interior of the support. 
Then, we conclude,
\[
\dot X(t) = - \partial_x p (X(t)),
\]
which agrees with the front propagation speed of the Hele-Shaw flow model.

Note the Lax-Wendroff type argument above does not give us the criterion to check convergence, but it implies, the discretization of the density equation from the conservative form \eqref{eq:model} is the key to capture the correct front propagation speed. We shall numerically verify in the next sections that, in various cases, the proposed numerical method gives numerical solutions with accurate moving boundaries.

\section{Numerical examples} \label{sec:num}
In this section, we conduct several numerical experiments to further investigate the behavior of the tumor growth model with various nutrient dependence. 

\subsection{2D radial symmetric case with constant growth}
We first consider the radial symmetric case in 2D. Here $r$ is chosen in $[0,3]$. For different $\gamma$, $\Delta t$ is chosen small enough such that the scheme is stable. Neumann boundary condition is taken at $r=0$ and Dirichlet condition $n(r=3, t) = 0$ is taken at the right boundary $r=3$. We also let the growing factor $G(c)$ to be uniformly one. 

\hspace{-0.65cm}{\bf Example 1: an expanding disk} Here the initial profile in $n$ is taken as 
\begin{equation} \label{eqn:IC00}
n(r,0) = \left\{ \begin{array}{cc} 0.99 & 0\leq r\leq 0.8 \\ 0 & 0.8< r \leq 3 \end{array} \right. \,,
\end{equation}
so that it resembles a characteristic function in the region $0\leq r \leq 0.8$. 
Fig.\ref{fig:example1} on the left displays the comparison of numerical solution with different $\gamma$, where one sees that the numerically obtained $n$ has a closer shape of a characteristic function for bigger $\gamma$, as we expected. Next we compare the numerical solution with the analytical solution adopted from (\ref{eqn:R-ball}). Specifically, given the fact that $n$ remains a characteristic function on the support of $0\leq r \leq R(t)$, one can write the analytical solution as 
\begin{equation} \label{eqn:n000}
n(r,t)= \chi_{0\leq r \leq R(t)}, \quad R(t) = 0.8 e^{t/2}\,.
\end{equation}
The results are collected in Fig.\ref{fig:example1} on the right, where a remarkable agreement on the front propagation speed is observed, despite that the numerical solution is always below 0.99, due to the reason that $\gamma$ is not large enough.  

\begin{figure}[!ht]
\centering
\includegraphics[width = 0.48\textwidth]{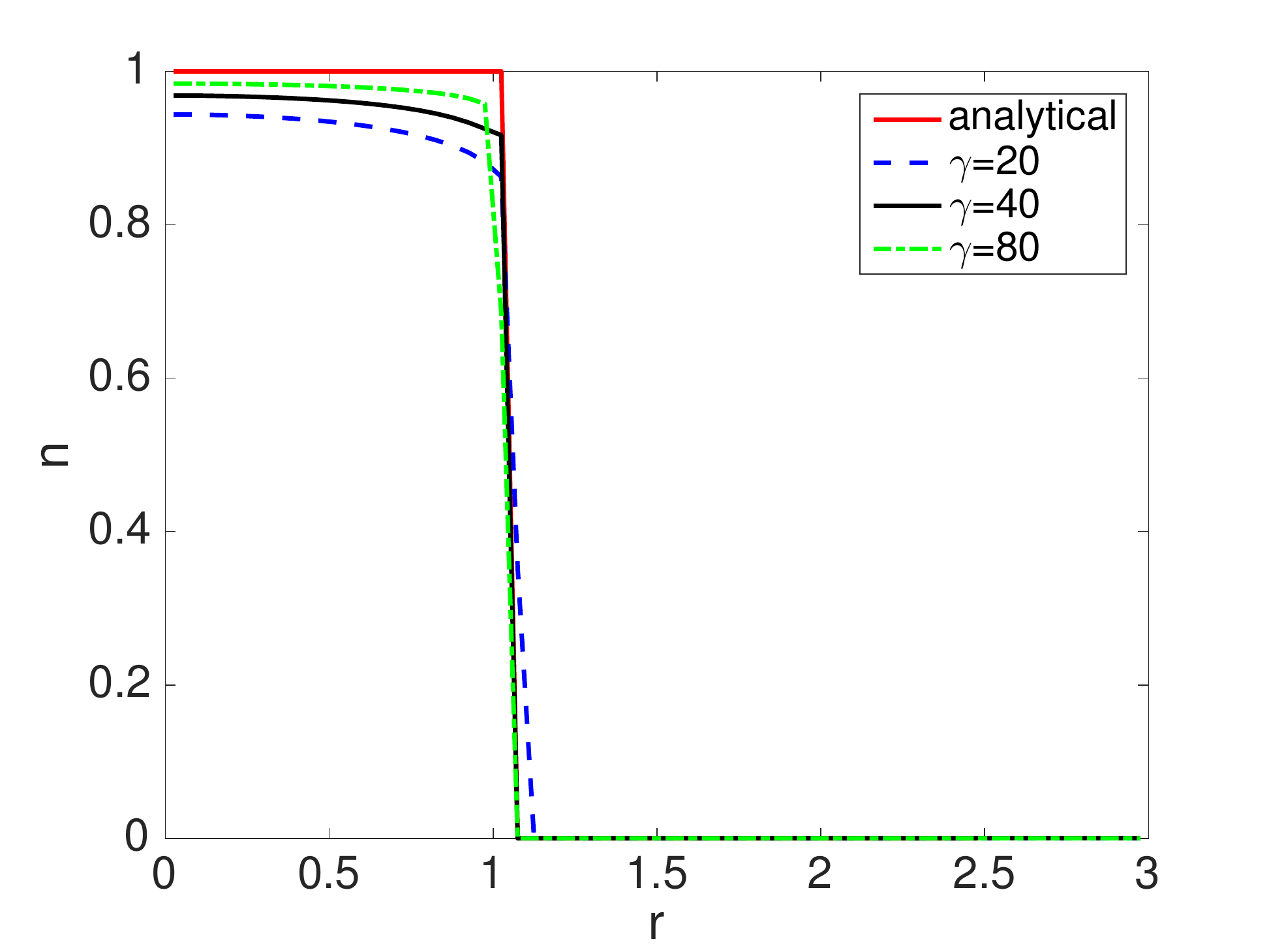}
\includegraphics[width = 0.48\textwidth]{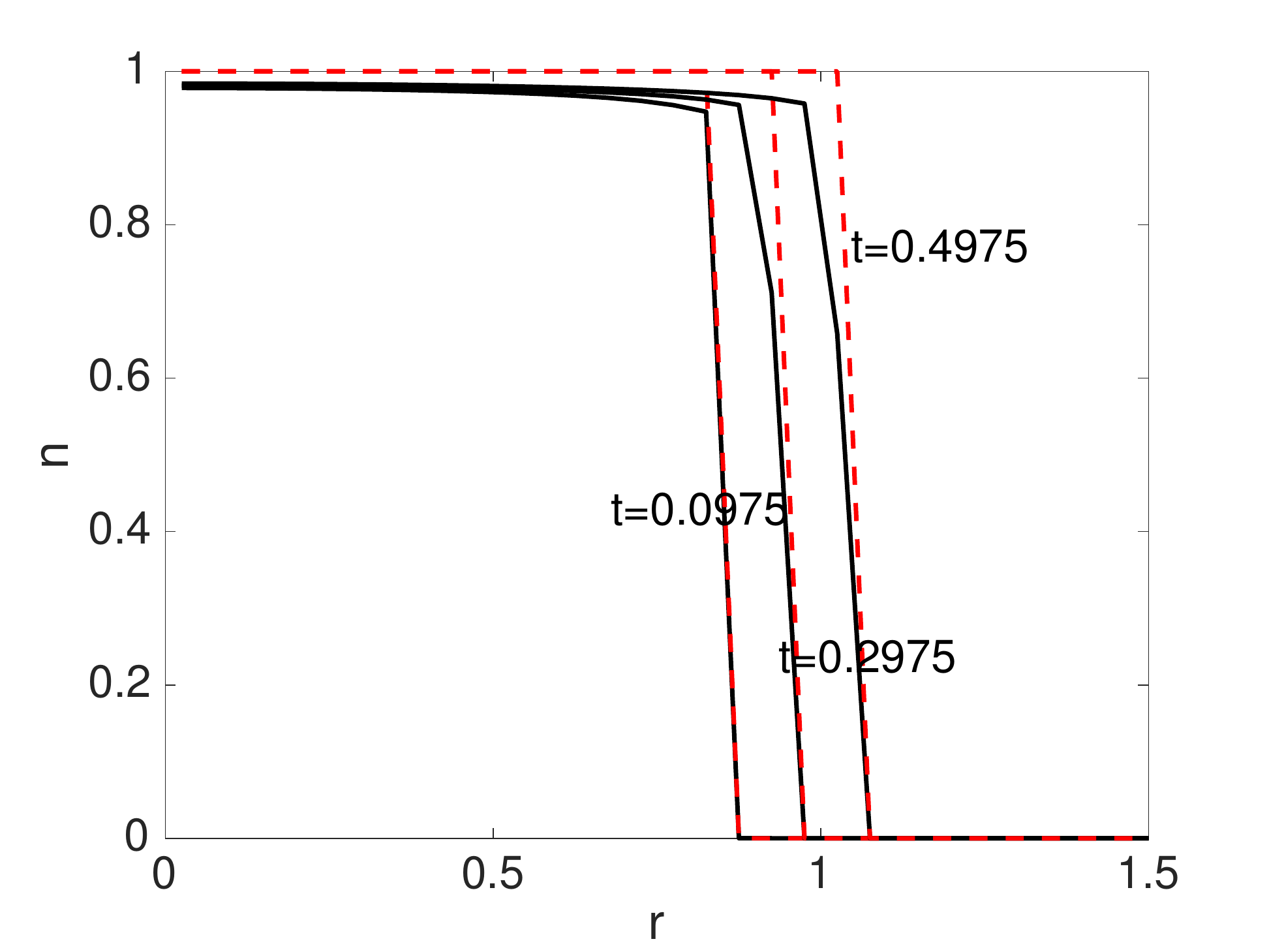}
\caption{Example 1: expanding disk with constant nutrient and initial data (\ref{eqn:IC00}). Left: plot of solution at time $t=0.5$ with different $\gamma = 20,\ 40, \  80$. Here $\Delta r = 0.05$, and $\Delta t = 5e{-5}$ for $\gamma = 20, \ 40$ and $\Delta t = 2.5e{-5}$ for $\gamma=80$. Right: comparison of the numerical solution with $\gamma = 80$ with the analytical solution (\ref{eqn:n000}) at different times $t=0.0975$,  $t=0.2975$, $t=0.4975$. Here the black solid curve is the numerical solution and the red dashed curve is the analytical solution. }
\label{fig:example1}
\end{figure}

\hspace{-0.65cm}{\bf Example 2: a single annulus} In this example, we take initial tumor density to be
\begin{equation} \label{eqn:IC01}
n(r,0) = \left\{ \begin{array}{cc} 0.99 & 0.6\leq r\leq 1 \\ 0 & \text{otherwise} \end{array} \right. \,.
\end{equation}
Then there are two boundaries, one is inside the annulus with initial position $r_-(0)=0.6$, and the other is outside the annulus with initial position $r_+(0)=1$. Again, we conduct two tests, one is with varying gamma, and the other is comparing the numerical solution with the analytical one at different times. The former test produces a result plotted on the left figure in Fig.\ref{fig:example2}. As we expected, when $\gamma$ gets larger, the numerical solution get closer the analytical limiting solution. In the latter test, to get an analytical solution, recall that in Section \ref{sec:analytical solutions}, the boundaries will move according to (\ref{eqn:r-sa}) (\ref{eqn:r+sa}). Thus we numerically solve these coupled ODE system at every time step to get the front position $r_-(t)$ and $r_+(t)$, and recover the analytical solution as
\begin{equation}  \label{eqn:n001}
n(r,t) = \chi_{r_-(t)\leq r \leq r_+(t)}\,.
\end{equation}
Fig. \ref{fig:example2} displays such a comparison at different times, where good agreement of the front speed is observed.
\begin{figure}[!ht]
\centering
\includegraphics[width = 0.48\textwidth]{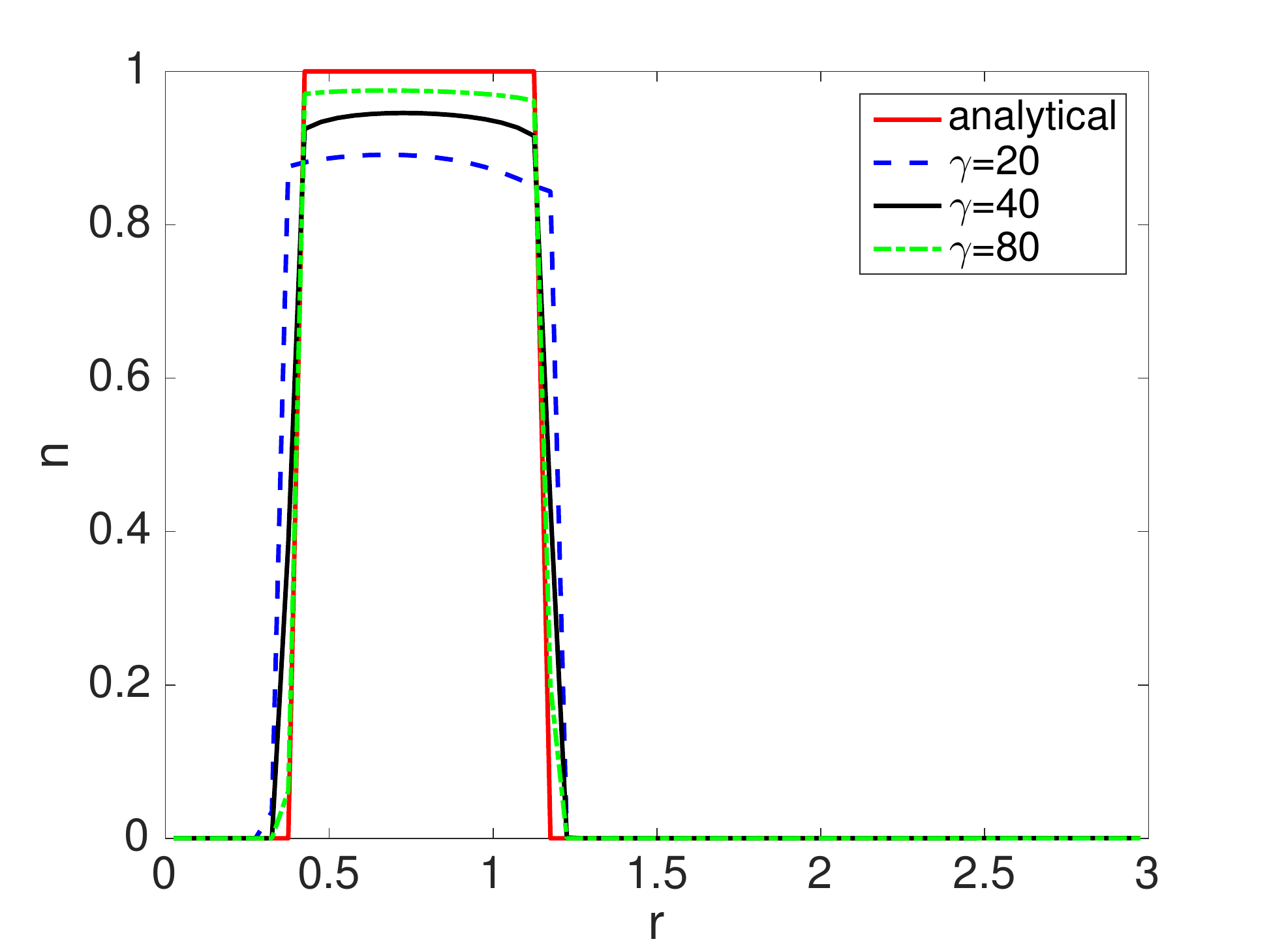}
\includegraphics[width = 0.48\textwidth]{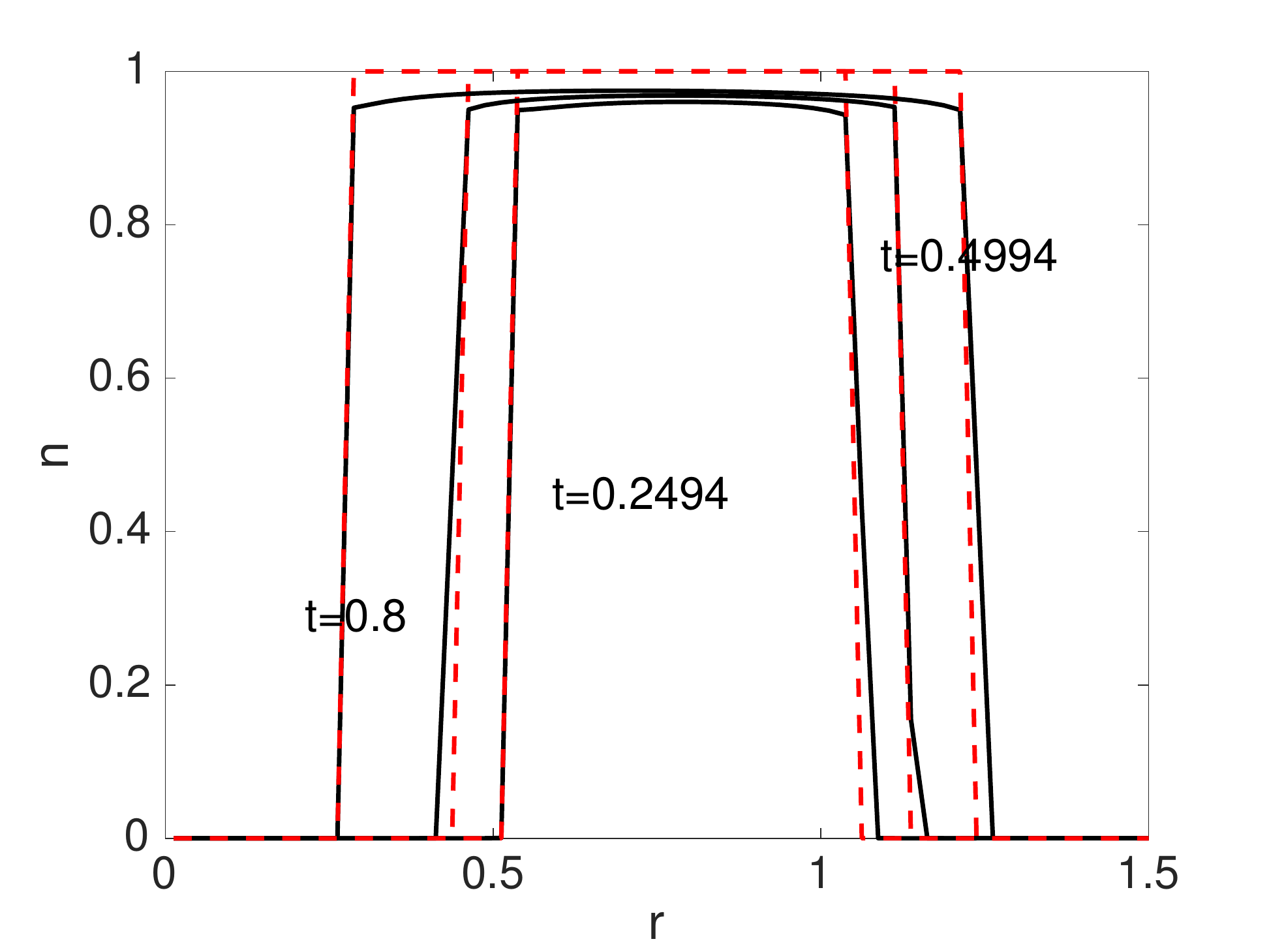}
\caption{Example 2: a single annulus with constant nutrient and initial data (\ref{eqn:IC01}). Left: plot of solution at time $t=0.6$ with different $\gamma = 20,\ 40, \  80$. Here $\Delta r = 0.05$, and $\Delta t = 2.5e{-5}$. Right: comparison of the numerical solution with $\gamma = 80$ with the analytical solution (\ref{eqn:n001}) at different times $t=0.2494$,  $t=0.4994$, $t=0.8$. Here we use $\Delta r = 0.025$ and $\Delta t = 6.25e\!-\!6$. The black solid curve is the numerical solution and the red dashed curve is the analytical solution.}
\label{fig:example2}
\end{figure}

\hspace{-0.65cm}{\bf Example 3: a double annulus} As a direct extension of the second example, we choose initial condition as 
\begin{equation} \label{eqn:IC02}
n(r,0) = \left\{ \begin{array}{cc} 0.99 & 0.6\leq r\leq 0.9  \quad \text{or} \quad 1.5\leq r\leq 1.8  \\ 0 & \text{otherwise} \end{array} \right. \,.
\end{equation}
so that it contains two annulus---the inner one with initial boundaries $r_1(0) = 0.6$, $r_2(0) = 0.9$, and the outer one with initial boundaries $r_3(0) = 1.5$, $r_4(0) =1.8 $. For brevity, we only compare the numerical solution with the analytical solution at different times. The results are given in Fig. \ref{fig:example3}, where the numerical solutions compare favorably with the analytical solution, especially the positions of the boundaries. 

\begin{figure}[!ht]
\centering
\includegraphics[width = 0.48\textwidth]{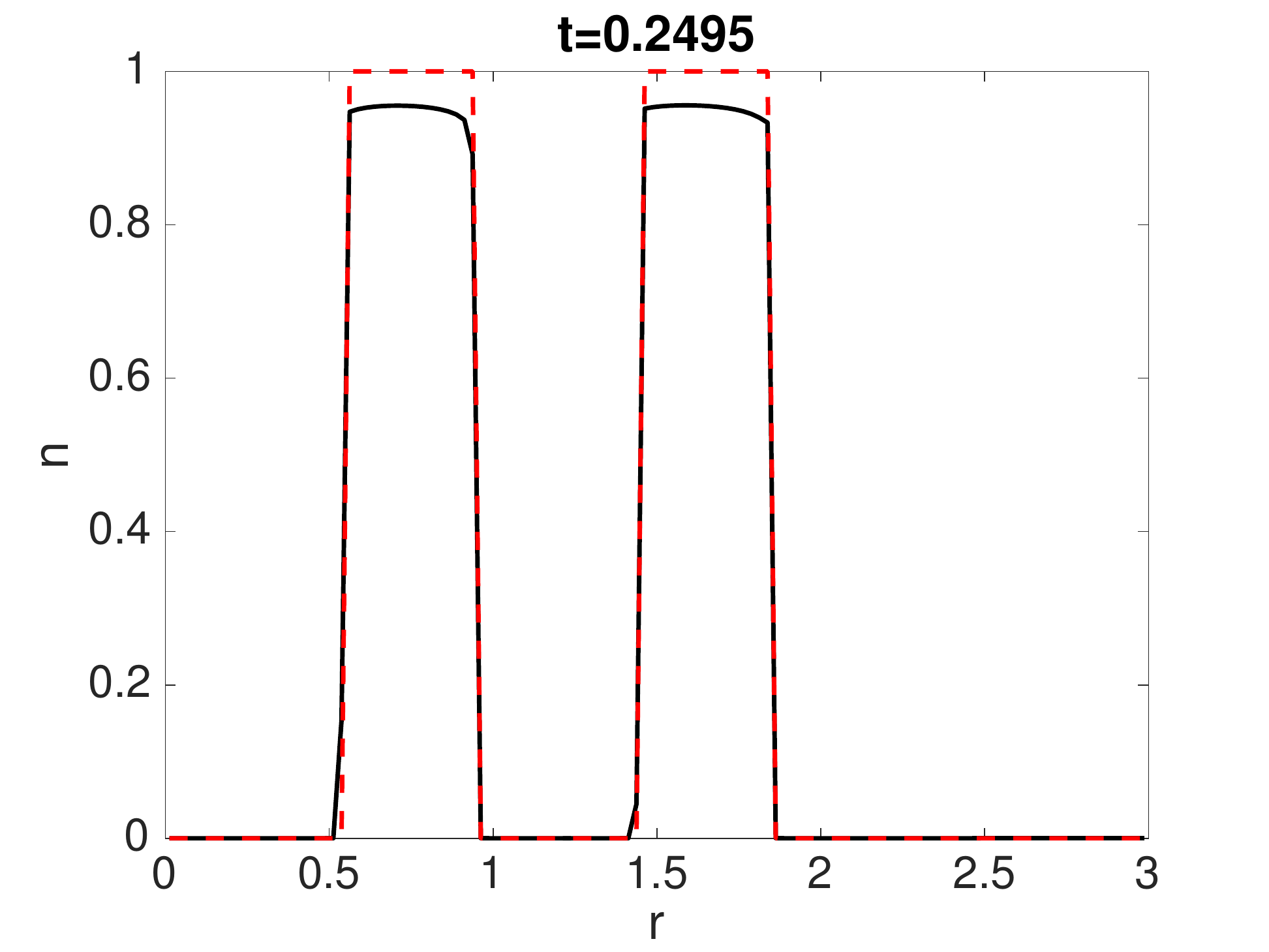}
\includegraphics[width = 0.48\textwidth]{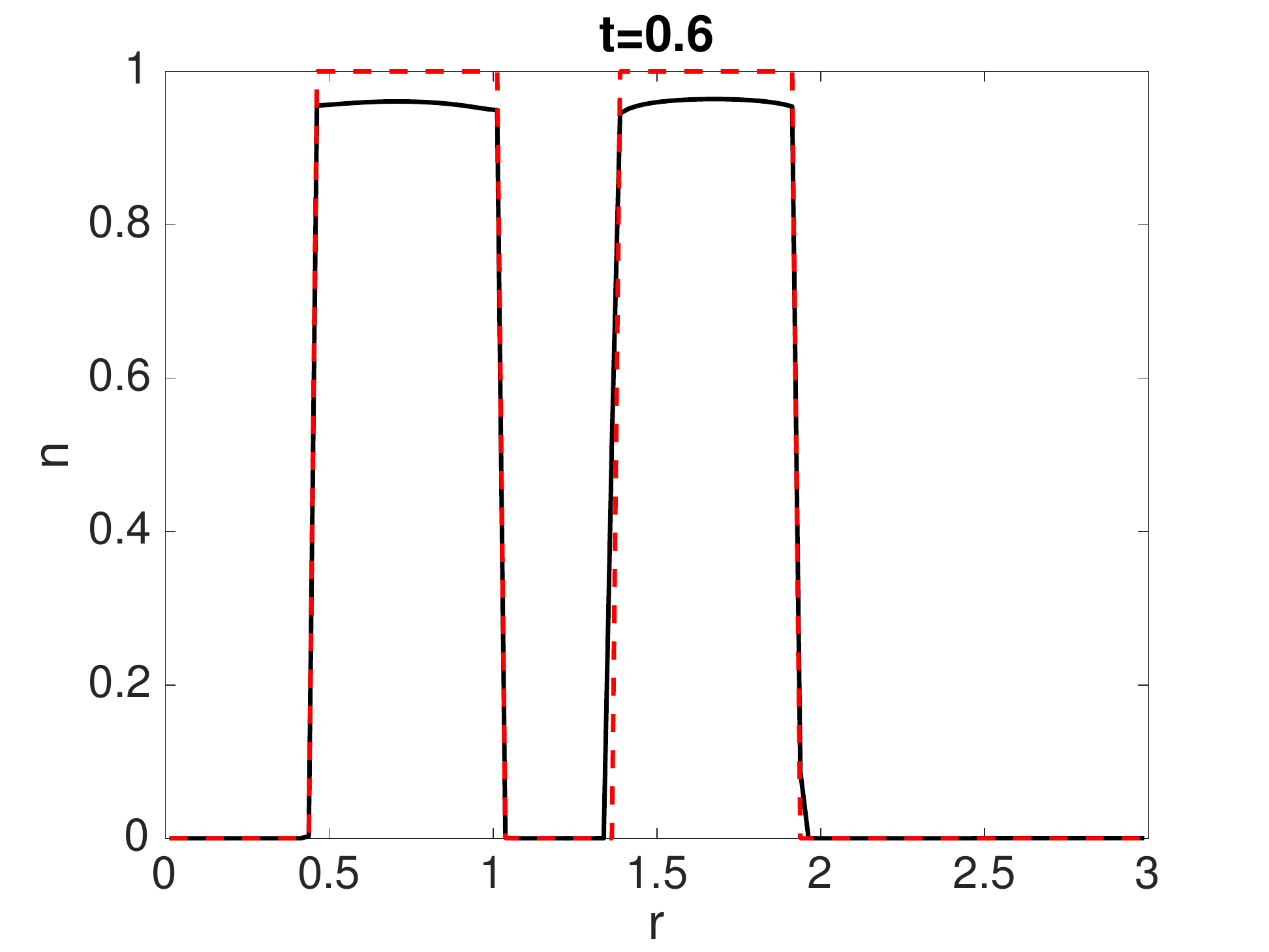}
\caption{Example 3: a double annulus with constant nutrient and initial data (\ref{eqn:IC02}). Here we compare the numerical solution (black solid curve) and analytical solution (red dashed curve) at time $t=0.2495$ (left) and $t=0.6$ (right). Here we use $\Delta r = 0.025$ and $\Delta t = 5e-6$. }
\label{fig:example3}
\end{figure}

\subsection{1D case}
Next, we test the cases when the growing function $G(c)$ has the form (\ref{eqn:G-linear}) with $G_0 = 1$, i.e., $G(c) = c$. Here we only consider the one dimensional setting and let $x\in [-5,5]$. Neumann boundary condition at both ends are used for $n$, whereas Dirichlet boundary condition $c=c_B=1$ are used for $c$ at both ends. The initial condition takes the form
\begin{equation}
n(x,0) = \frac{0.99}{2} (-\tanh(100(x-1)) + \tanh(100(x+1))) 
\end{equation}
such that the two boundaries initially settle at $\pm 1$.

\hspace{-0.65cm}{\bf Example 4: 1D {\it in vitro} model} As always, we test two things here: one is to examine the dependence of the solution on $\gamma$, and the other is to compare the solution with the analytical result. In the left figure of Fig. \ref{fig:example4} , we plot different profiles of $n$ with $\gamma = 20, \ 40, \ 80$, where again as expected, the larger $\gamma$ leads to a shape of $n$ that is closer to the analytical limiting profile. The analytical solution is obtained as
\begin{equation} \label{eqn:n-invtro-000}
n(x,t) = \chi_{-R(t)\leq x \leq R(t)} \,,
\end{equation}
where $R(t)$ is calculated via (\ref{eqn:R-invitro}). Fig. \ref{fig:example4} on the right plots the numerical solution with the analytical one (\ref{eqn:n-invtro-000}) with remarkable agreement. 

\begin{figure}[!ht]
\centering
\includegraphics[width = 0.48\textwidth]{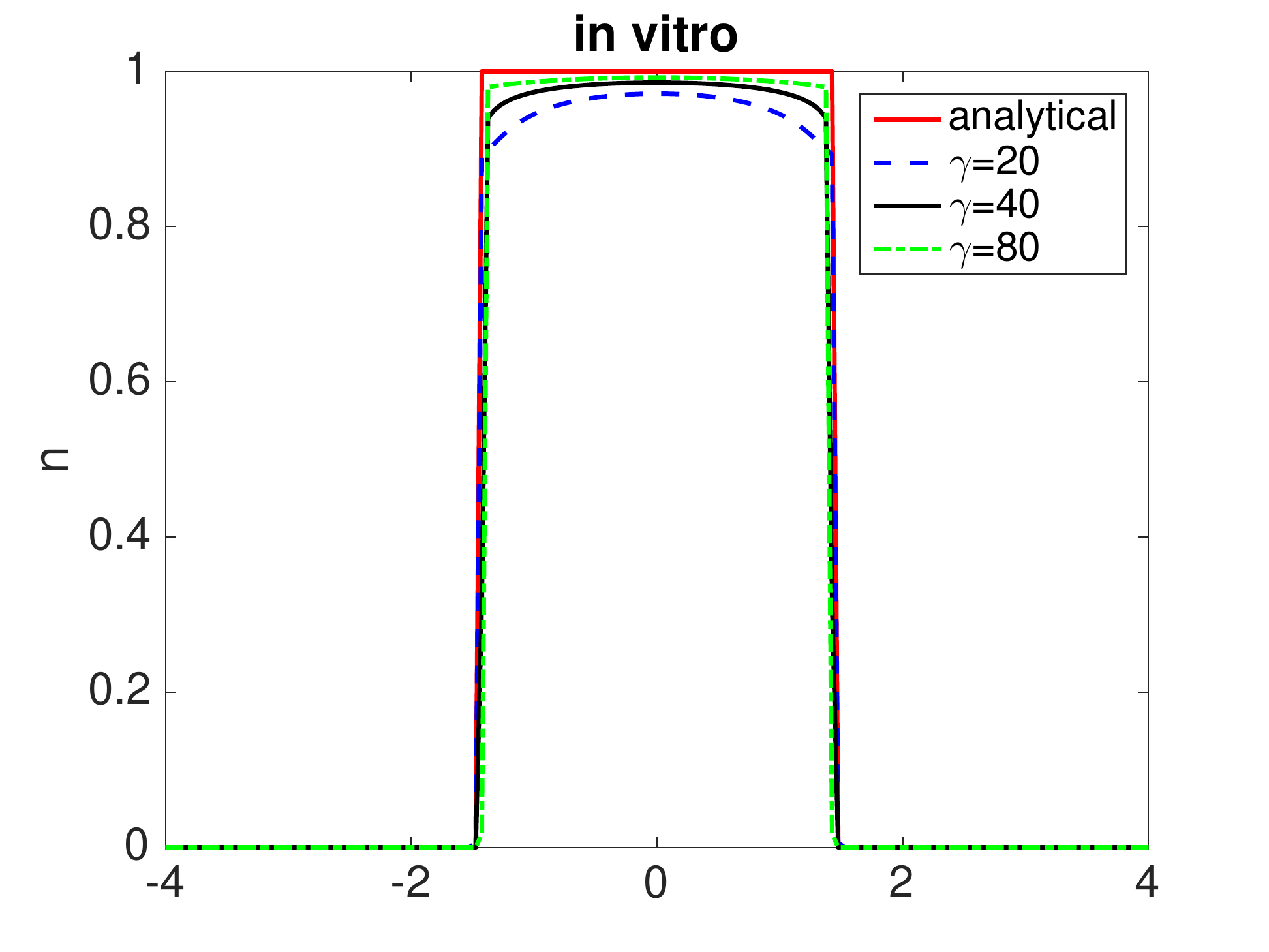}
\includegraphics[width = 0.48\textwidth]{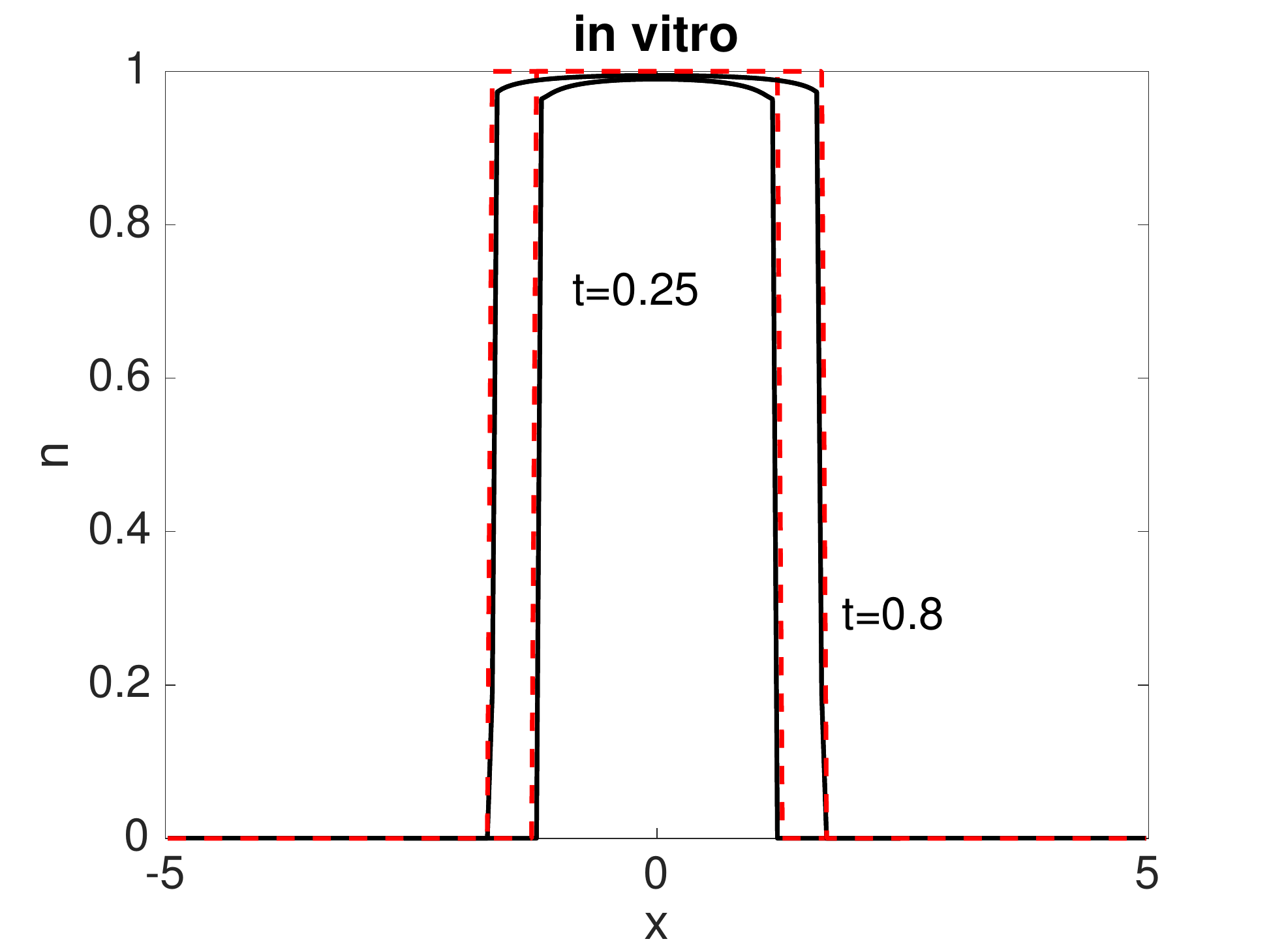}
\caption{Example 4: a 1D {in vitro} model with linear growing function. Left: plots of $n$ at time $t=0.5$ with various $\gamma = 20, \ 40, \ 80$. The red curve is the analytical solution (\ref{eqn:n-invtro-000}). Here $\Delta x = 0.05$ and $\Delta t = 2.5e-5$.}
\label{fig:example4}
\end{figure}

\hspace{-0.65cm}{\bf Example 5: 1D {\it in vitro} model} Similar to the previous example, we generate two plots in Fig. \ref{fig:example5}. Here the analytical solution is take the same form as in (\ref{eqn:n-invtro-000}) but with $R(t)$ obtained by calculating (\ref{eqn:R-invivo-0}) instead. 
\begin{figure}[!ht]
\centering
\includegraphics[width = 0.48\textwidth]{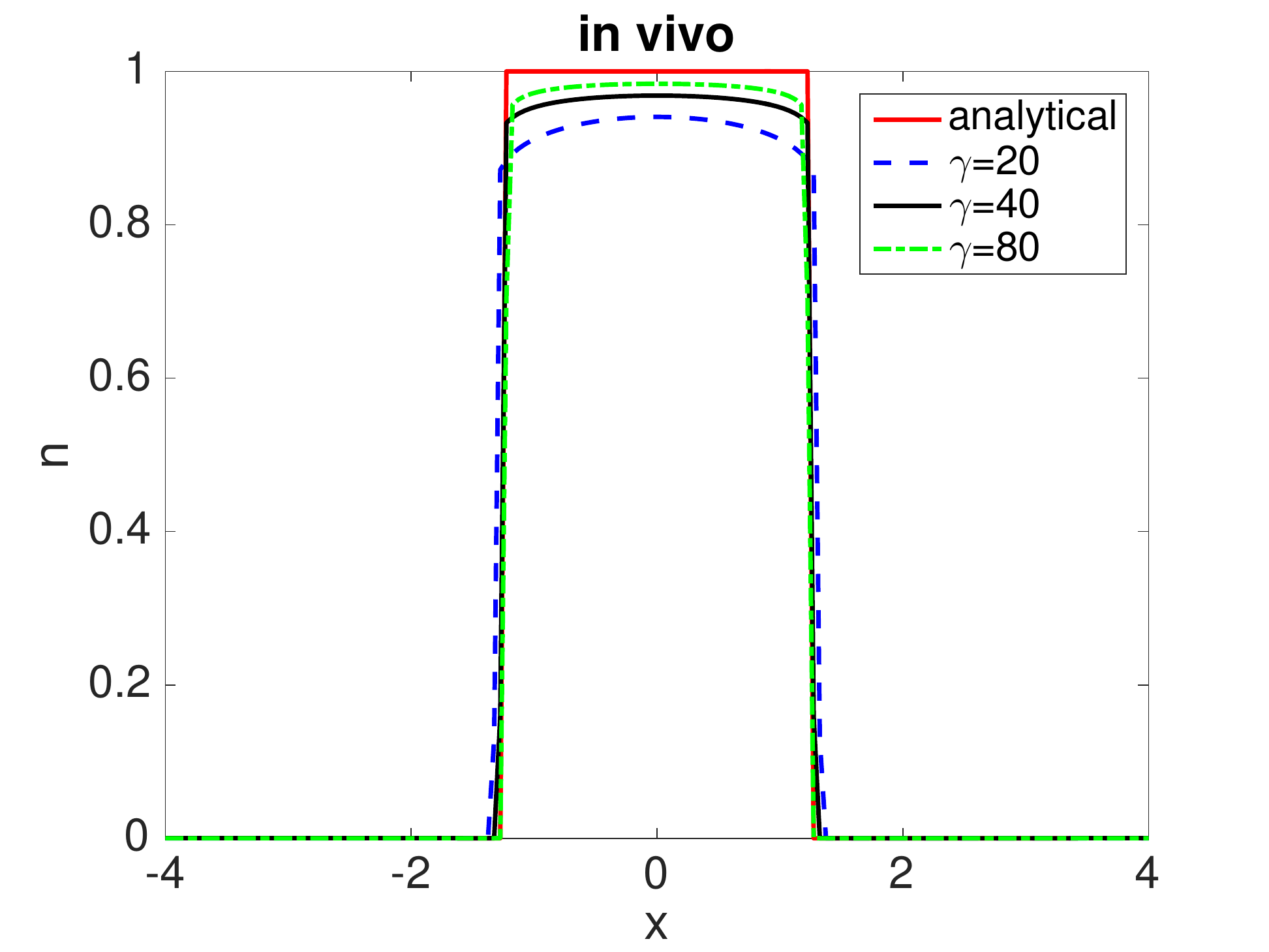}
\includegraphics[width = 0.48\textwidth]{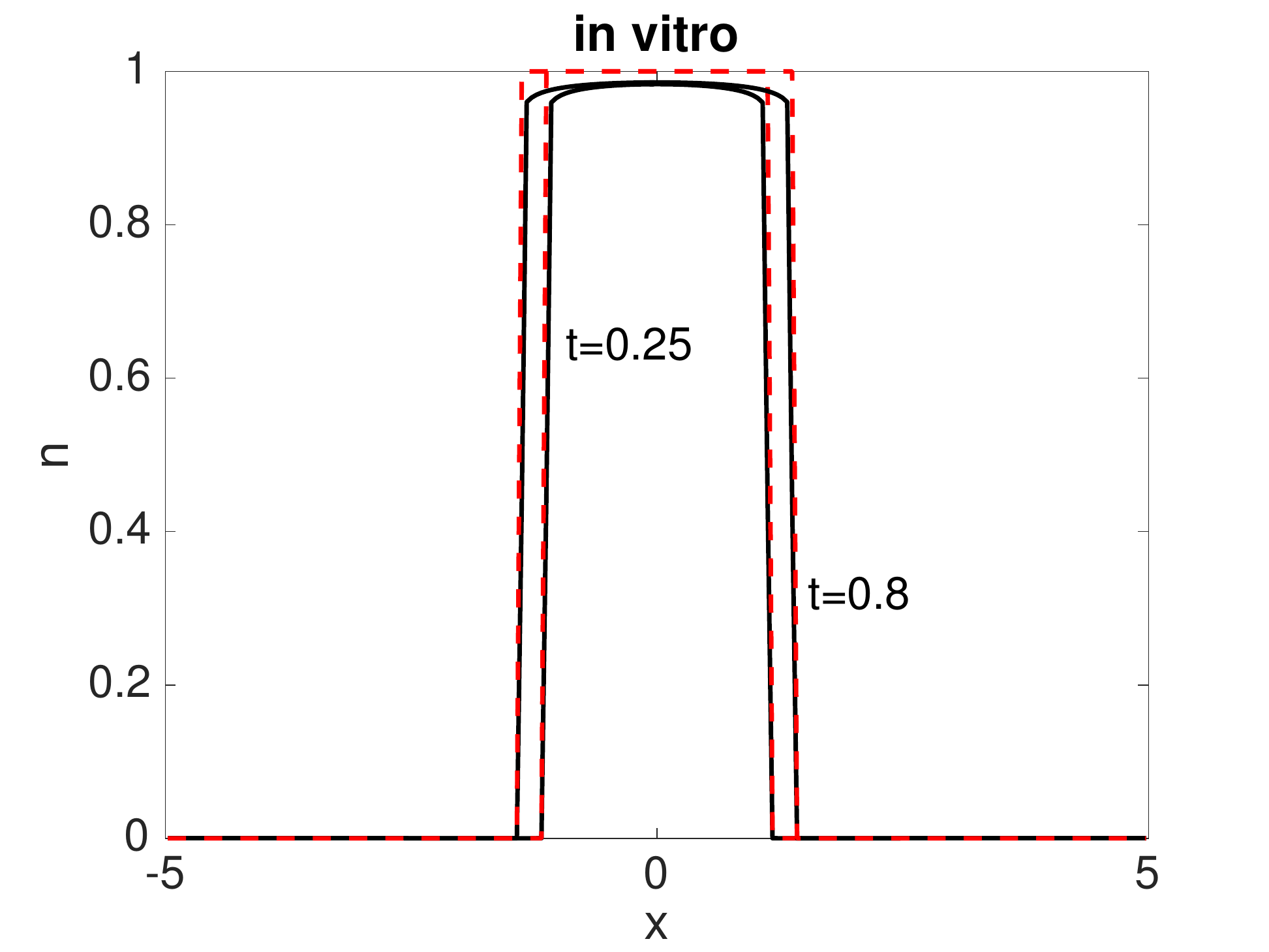}
\caption{Example 5: a 1D {in vivo} model with linear growing function. Left: plots of $n$ at time $t=0.5$ with various $\gamma = 20, \ 40, \ 80$. The red curve is the analytical solution (\ref{eqn:n-invtro-000}). Here $\Delta x = 0.05$ and $\Delta t = 2.5e-5$.}
\label{fig:example5}
\end{figure}
We also compare the front propagation speed of the {\it in vitro} model and {\it in vivo} model. As predicted by (\ref{eqn:R-invitro}) and (\ref{eqn:R-invivo-0}), in the long time limit, the front in the {\it in vitro} model will move twice as fast as that in {\it in vivo} model, and it is confirmed by our Fig. \ref{fig:speed}.
\begin{figure}[!ht]
\centering
\includegraphics[width = 0.6\textwidth]{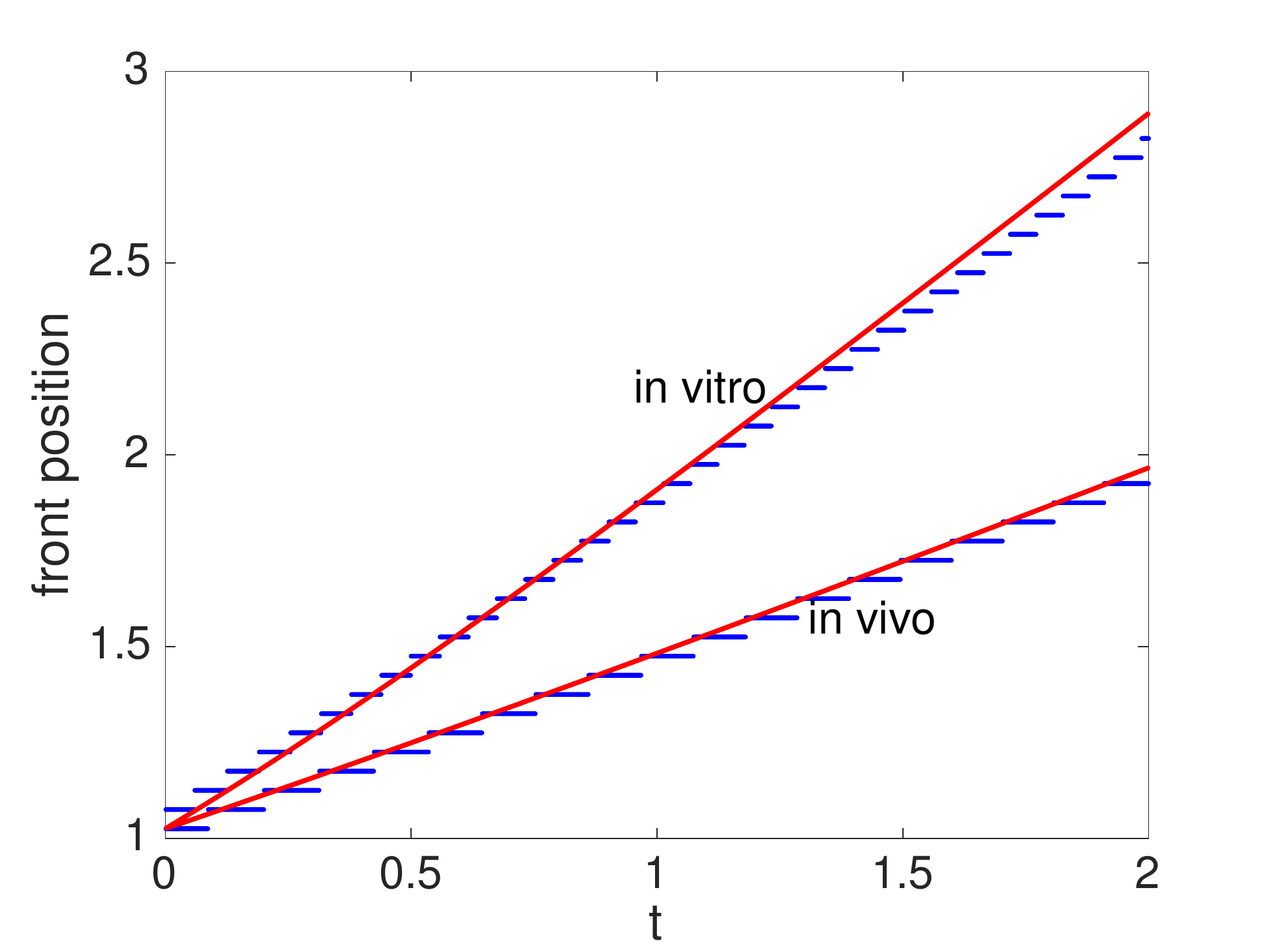}
\caption{A comparison of the front propagation speed for 1D {\it in vitro} model and {\it in vivo} model. The dots represent the the position of the right boundary at each time, and the curves are computed via (\ref{eqn:R-invitro}) and (\ref{eqn:R-invivo-0}). Here $\Delta x =0.05$, $\Delta t = 2.5e-5$, $\gamma = 80$.}
\label{fig:speed}
\end{figure}

\subsection{2D radial symmetric case with linear growth}
{\bf Example $6\&7$: 2D radial symmetric {\it in vitro} and {\it in vivo} model} 
\\Here we again consider linear growth with $G(c) = c$, and evolute $c$ either according to {\it in vitro} or {\it in vivo} model. The initial data is taken as 
\begin{equation} \label{eqn:IC02}
n(r,0) = \left\{ \begin{array}{cc} 0.99 & 0\leq r \leq 0.8    \\ 0 & \text{otherwise} \end{array} \right. \,.
\end{equation}
We choose the computational domain $r \in [0,3]$, and mesh size $\Delta r = 0.05$. Neumann boundary condition is used for both $n$ and $c$ at $r=0$, and Dirichlet boundary condition with $n = 0$ and $c = 1$ are used at $r=3$. For brevity, we only plot the wave front position versus time for these two models with $\gamma = 80$ in Fig.~\ref{fig:2D-vitro-vivo}. Solutions with different $\gamma$ or at different times are very much similar like that in Example 1. As seen in Fig.~\ref{fig:2D-vitro-vivo}, the front propagates at a faster speed in the {\it in vitro} model than the {\it in vivo} model, which is consistent with what we have derived. We also observe a good match between the numerical computed wave front and the analytical ones computed from the limiting model. 
\begin{figure}[!ht]
\centering
\includegraphics[width = 0.6\textwidth]{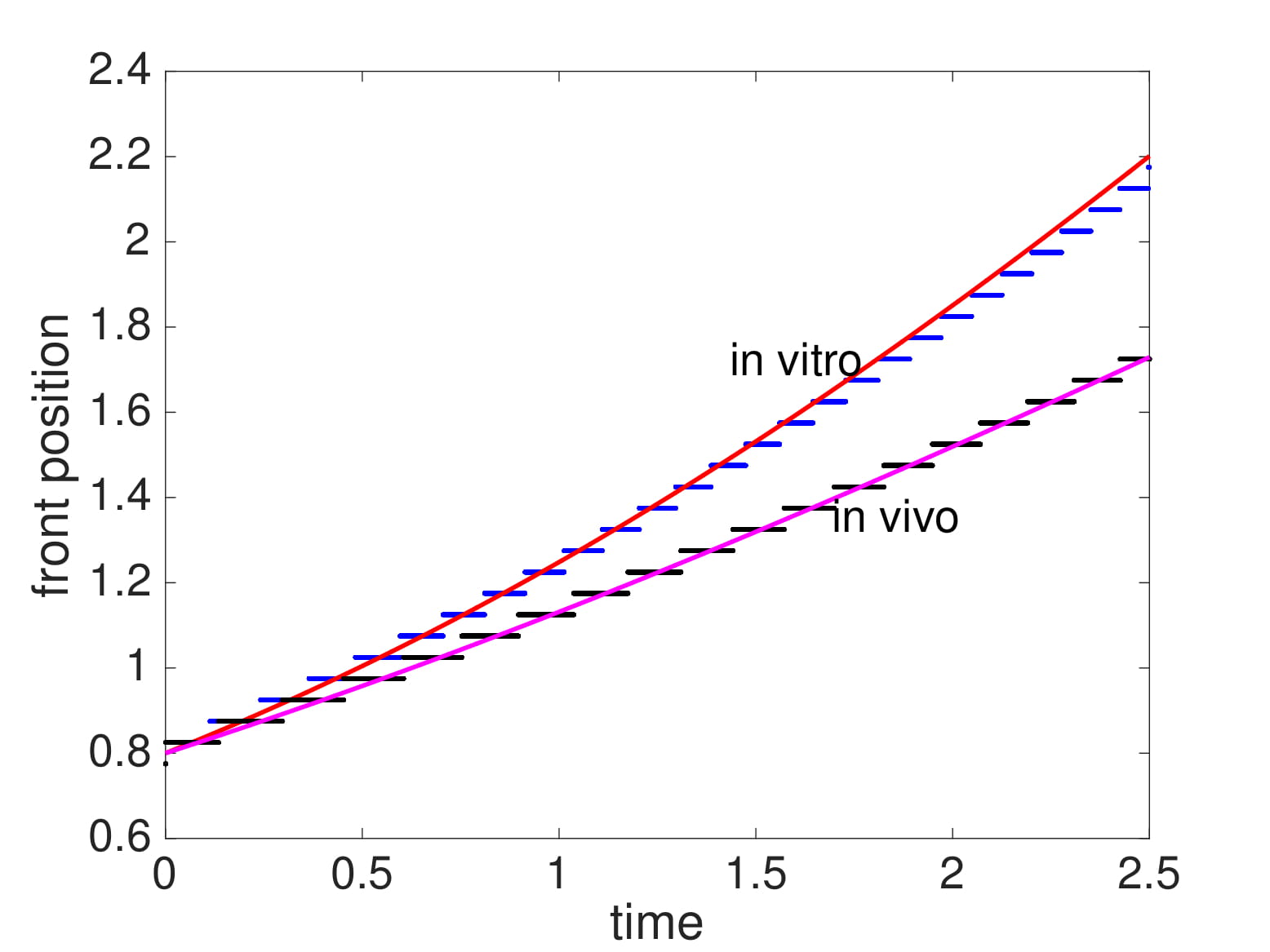}
\caption{A comparison of the front propagation speed in the 2D radial symmetric {\it in vitro} model and {\it in vivo} model. The dots represent the the position of the right boundary at each time, and the curve are computed via (\ref{eqn:Rtt-vitro}) and (\ref{eqn:Rtt-vivo}) . Here $\Delta x =0.05$, $\Delta t = 2.5e-5$, $\gamma = 80$.}
\label{fig:2D-vitro-vivo}
\end{figure}

\subsection{2D geometric motion with constant growth in $c$}
At last, we conduct two 2D examples with constant nutrient, i.e., $G_0= 1$. The computational domain is set to be $(x,y)\in[-2,2]\times[-2,2]$, and $\Delta x  = \Delta y = 2/30$. The first example we compute using the following initial data 
\begin{equation} \label{IC:2D26}
n(x,y,0) = \left\{ \begin{array}{cc} 0.99 & (x,y)\in[0,0.5]\times[0,0.5] ~ \text{or} ~ [-0.6,-0.2]\times[-0.2,0.8]    \\ 0 & \text{otherwise} \end{array} \right. \,.
\end{equation}
In Fig.~\ref{fig:2D-rectangle},  we plot $n$ at different times $t=0,\ 0.0177, \ 0.0311, \ 0.05$, and we see that as times goes, the boundaries of tumors get smeared, and two tumors merge gradually. 
\begin{figure}[!ht]
\centering
\includegraphics[width = 0.45\textwidth]{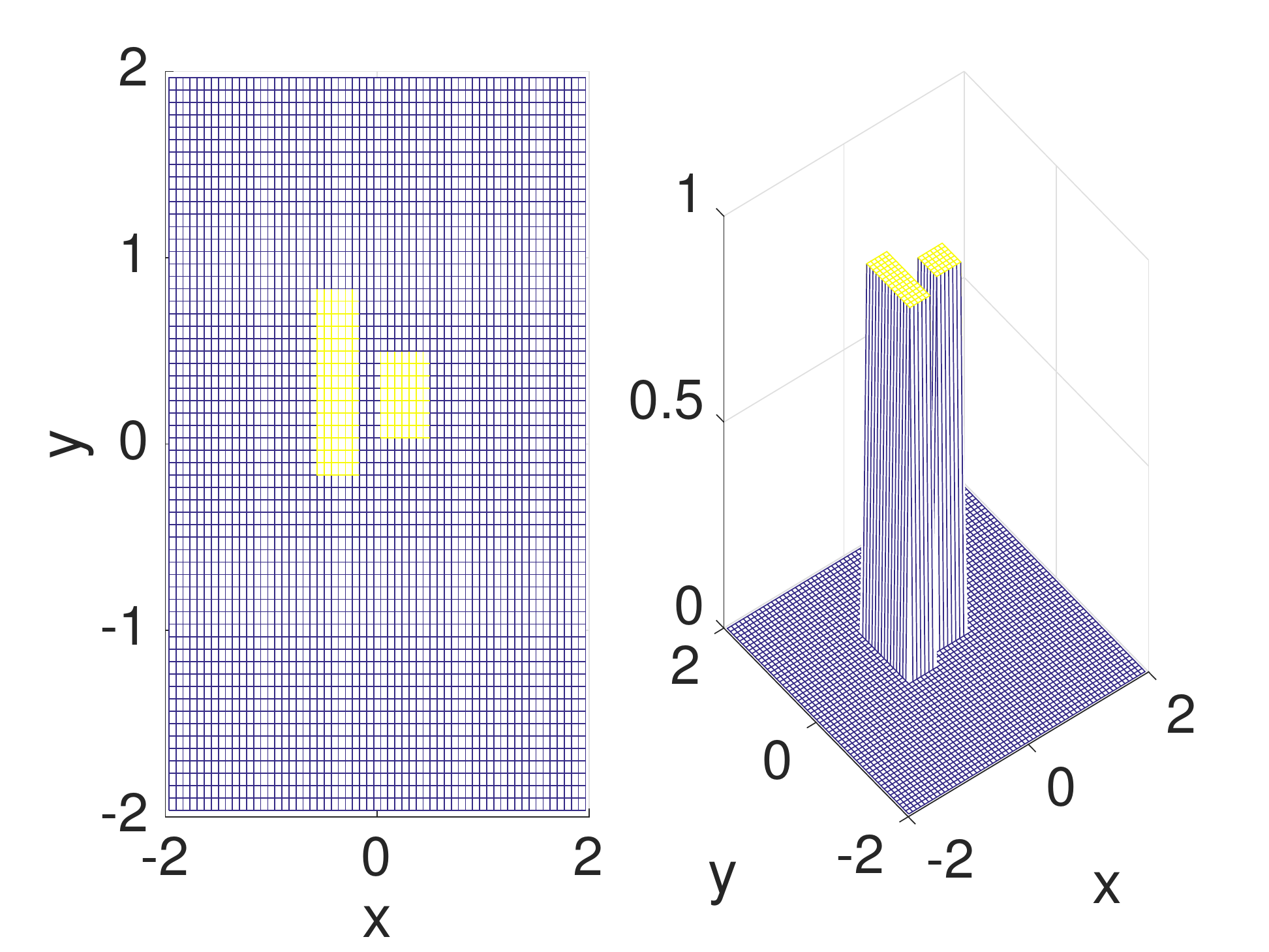}
\includegraphics[width = 0.45\textwidth]{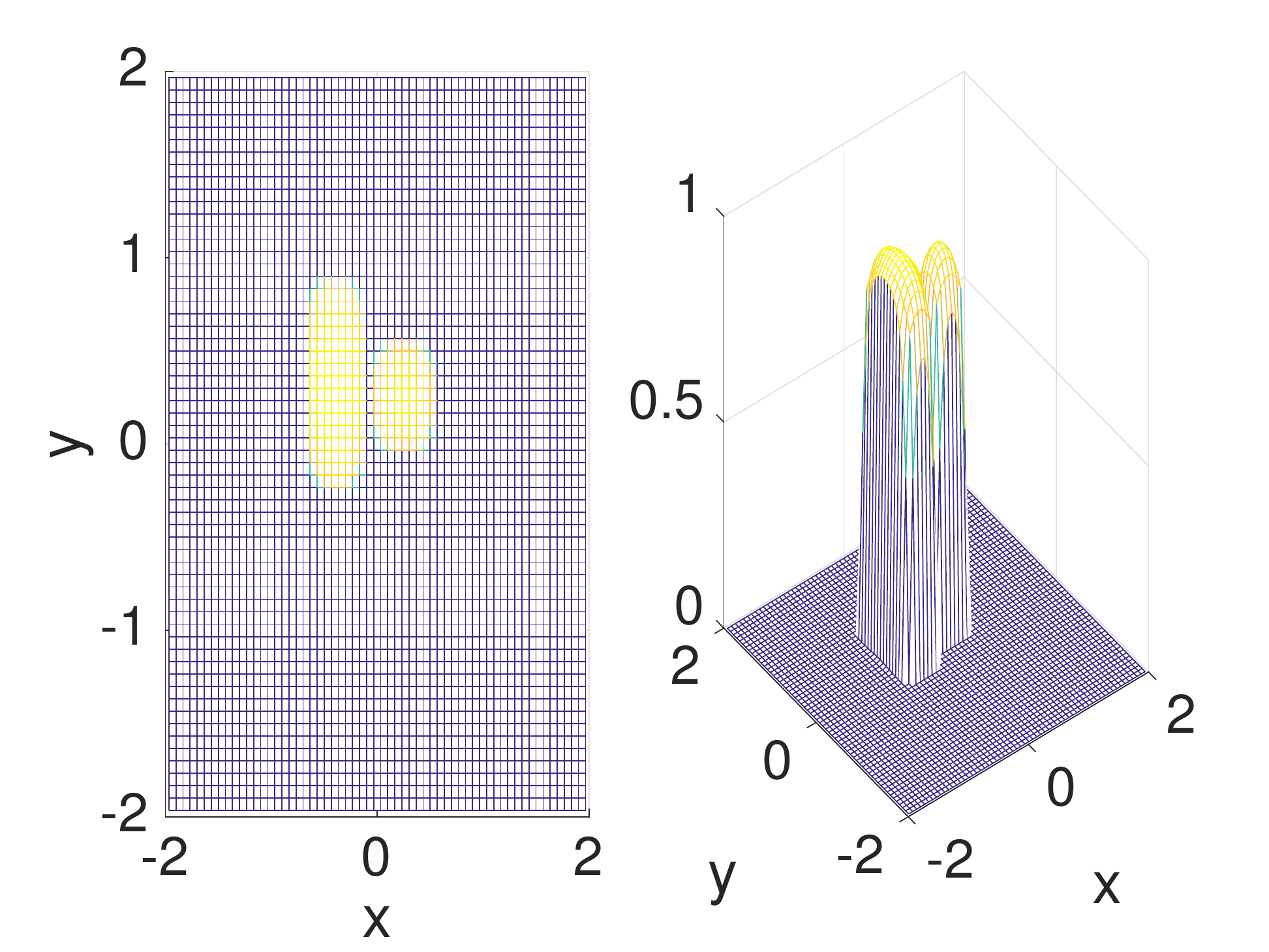}
\\
\includegraphics[width = 0.45\textwidth]{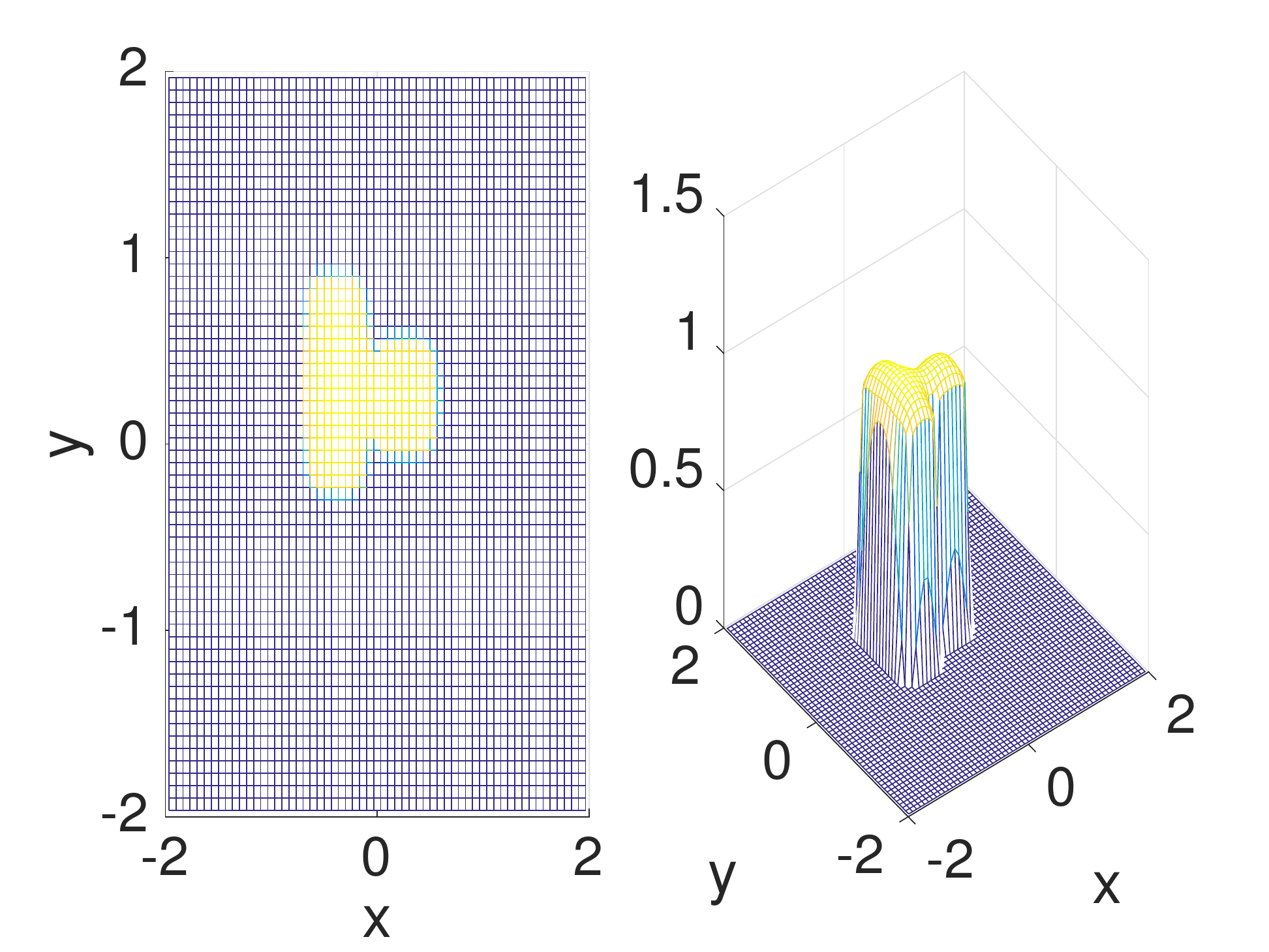}
\includegraphics[width = 0.45\textwidth]{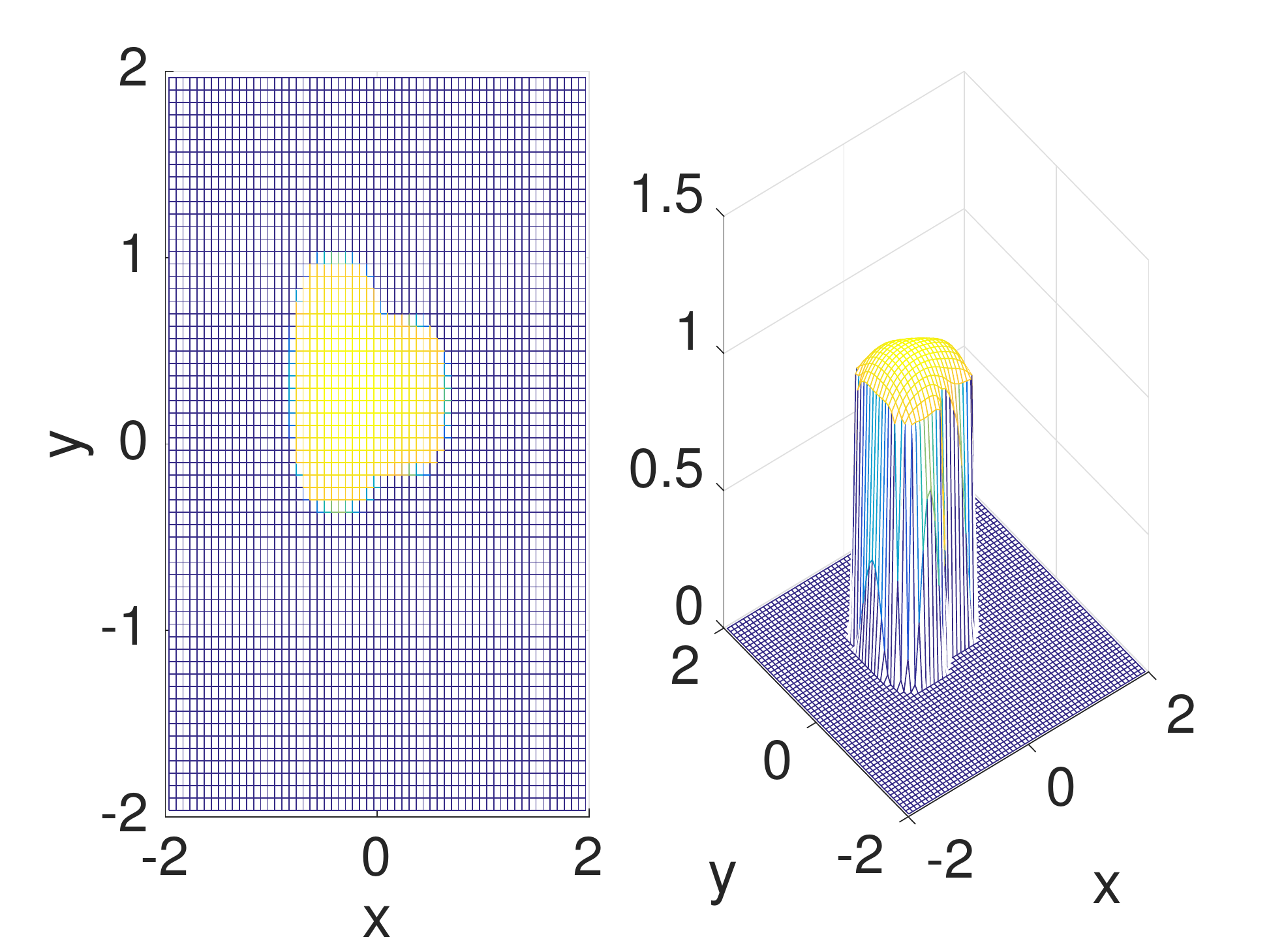}
\caption{Plot of $n$ at four different times with initial data (\ref{IC:2D26}). From left to right, up to down, $t = 0$, $t = 0.0177$, $t=0.0311$, $t = 0.05$.}
\label{fig:2D-rectangle}
\end{figure}

In the second example we use initial data
\begin{equation}\label{IC:2D27}
n(x,y,0) = \left\{ \begin{array}{cc} 0.9 & \sqrt{x^2 + y^2}-0.5-\sin(4\arctan(y/x))/2<0   \\ 0 & \text{otherwise} \end{array} \right. \,.
\end{equation}
and again we plot $n$ at different times . The results are collected in Fig.~\ref{fig:2D-flower}. Here it is important to note that since there exist no upper bound for the pressure and $\gamma$ is not large enough, the maximum density may exceed 1, which induces severe accuracy and stability requirements of the mesh sizes and time steps. Designing more efficient numerical schemes will be our future work. 
\begin{figure}[!ht]
\centering
\includegraphics[width = 0.45\textwidth]{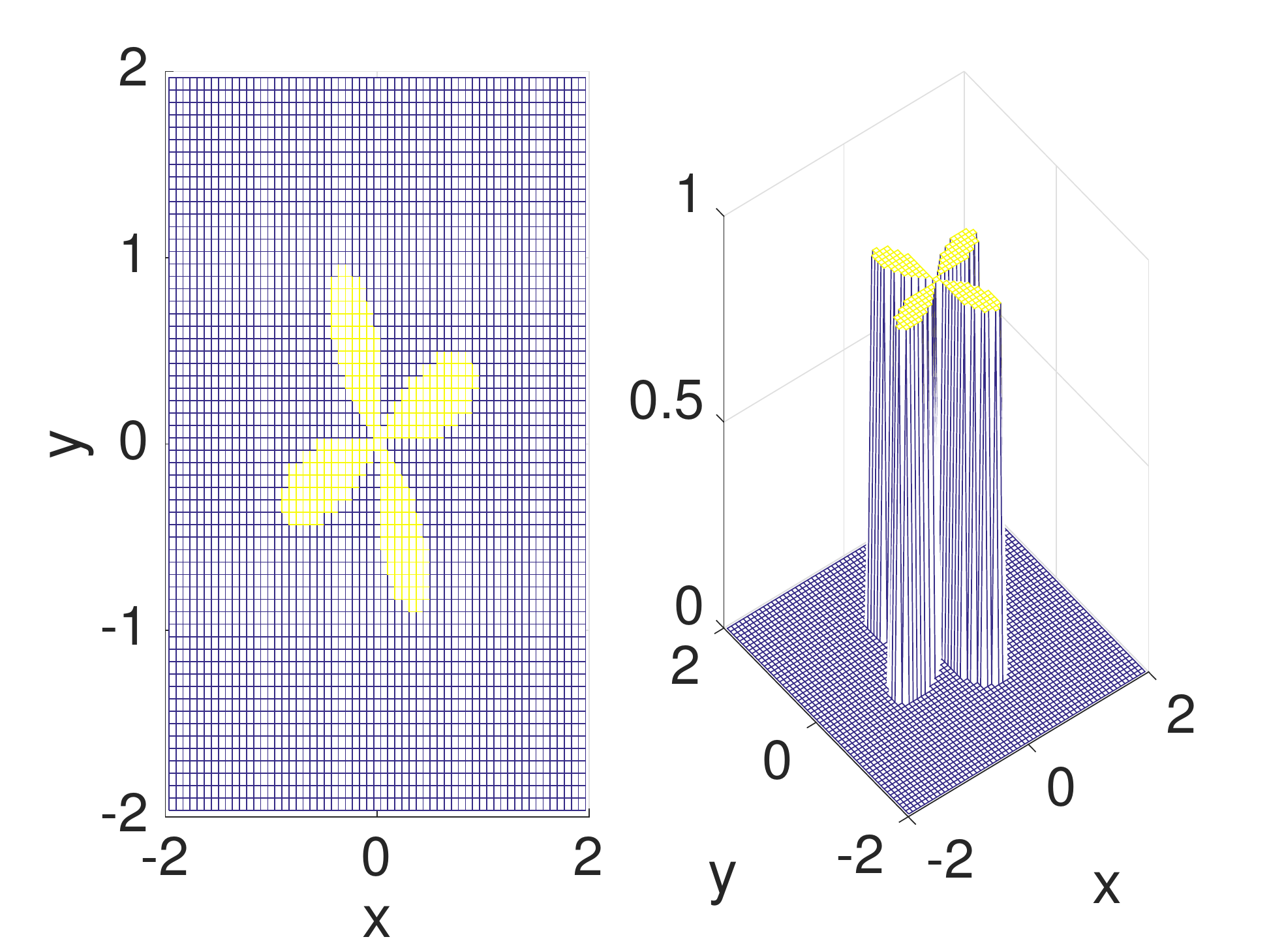}
\includegraphics[width = 0.45\textwidth]{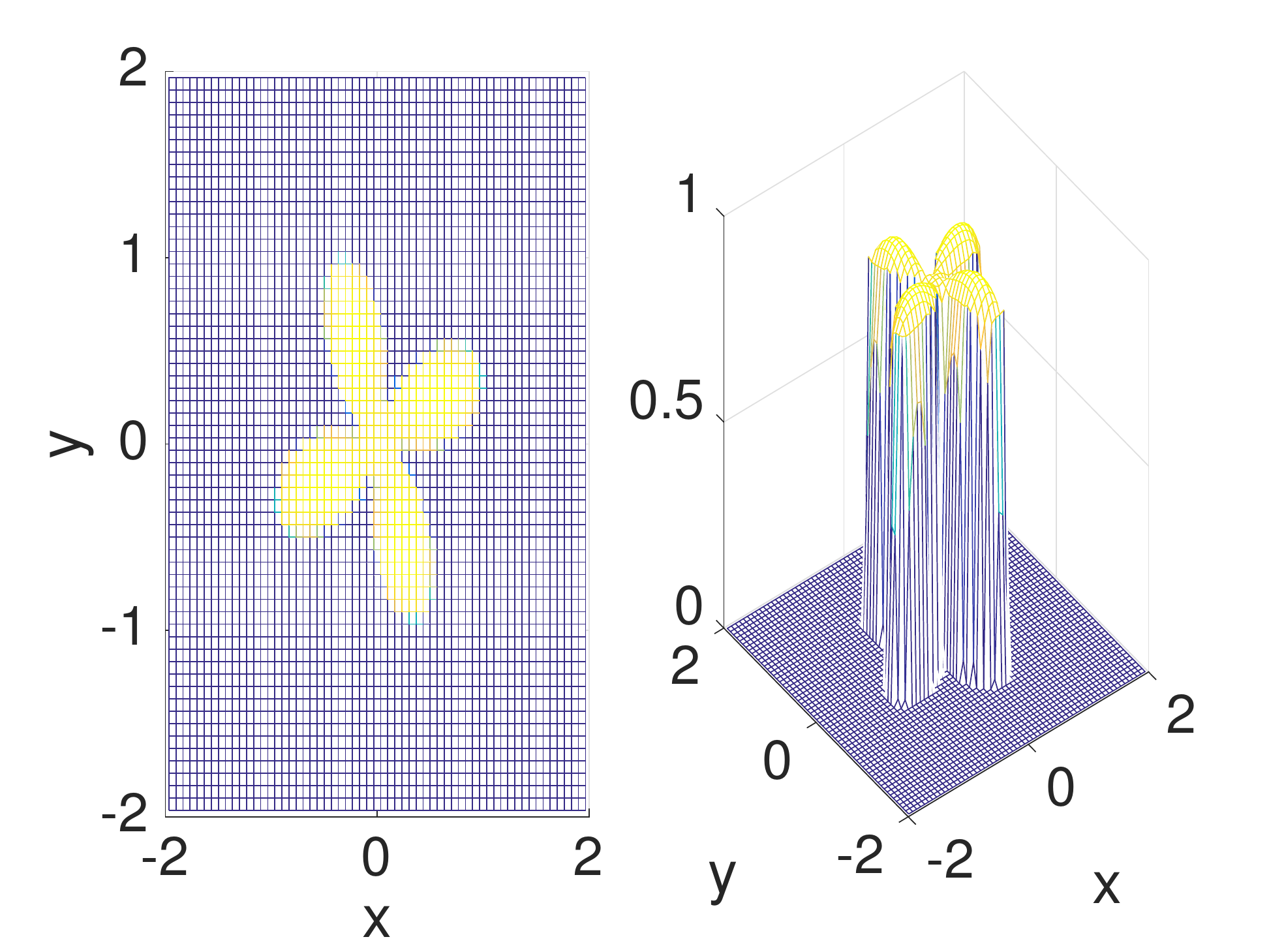}
\\
\includegraphics[width = 0.45\textwidth]{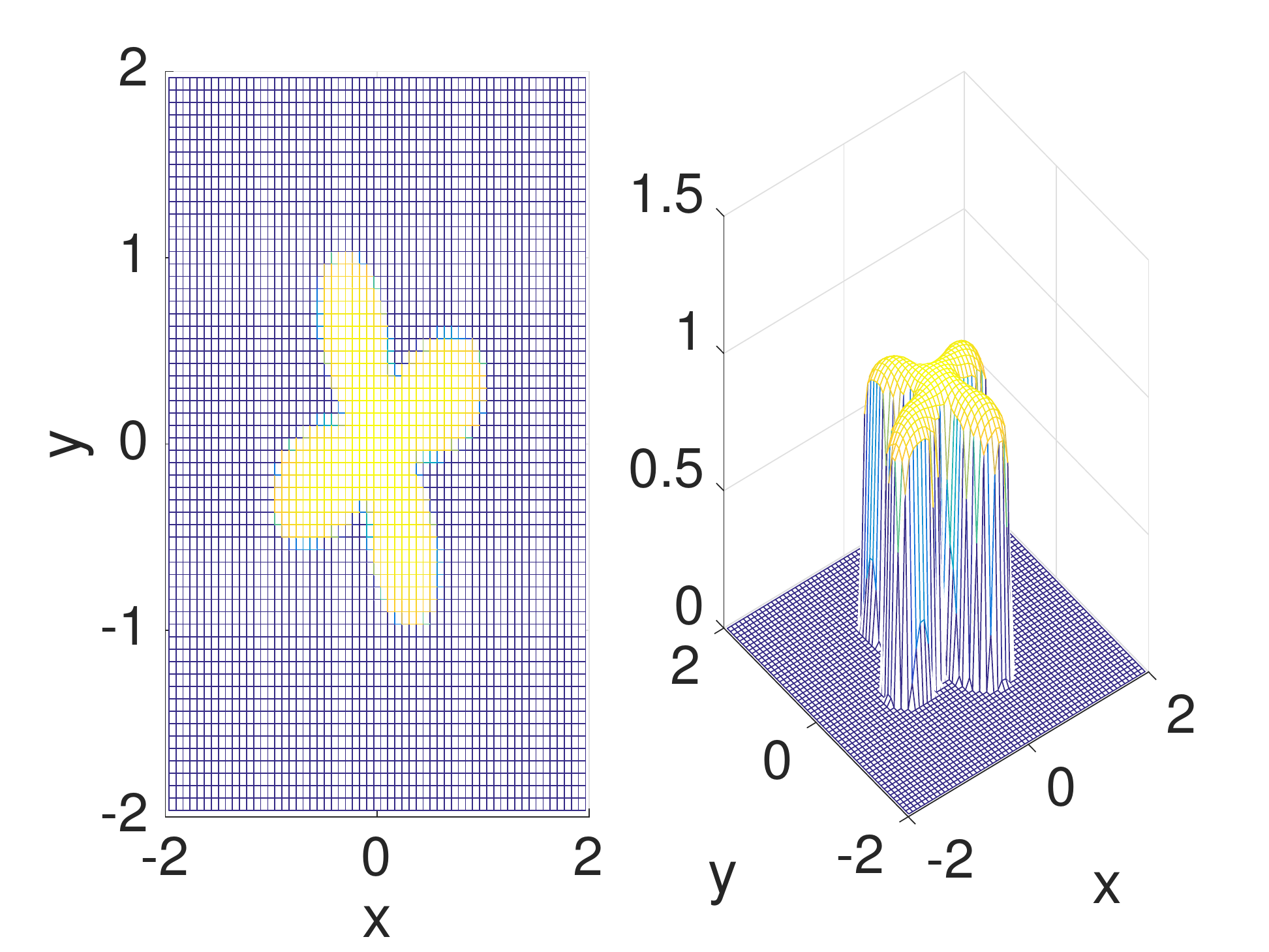}
\includegraphics[width = 0.45\textwidth]{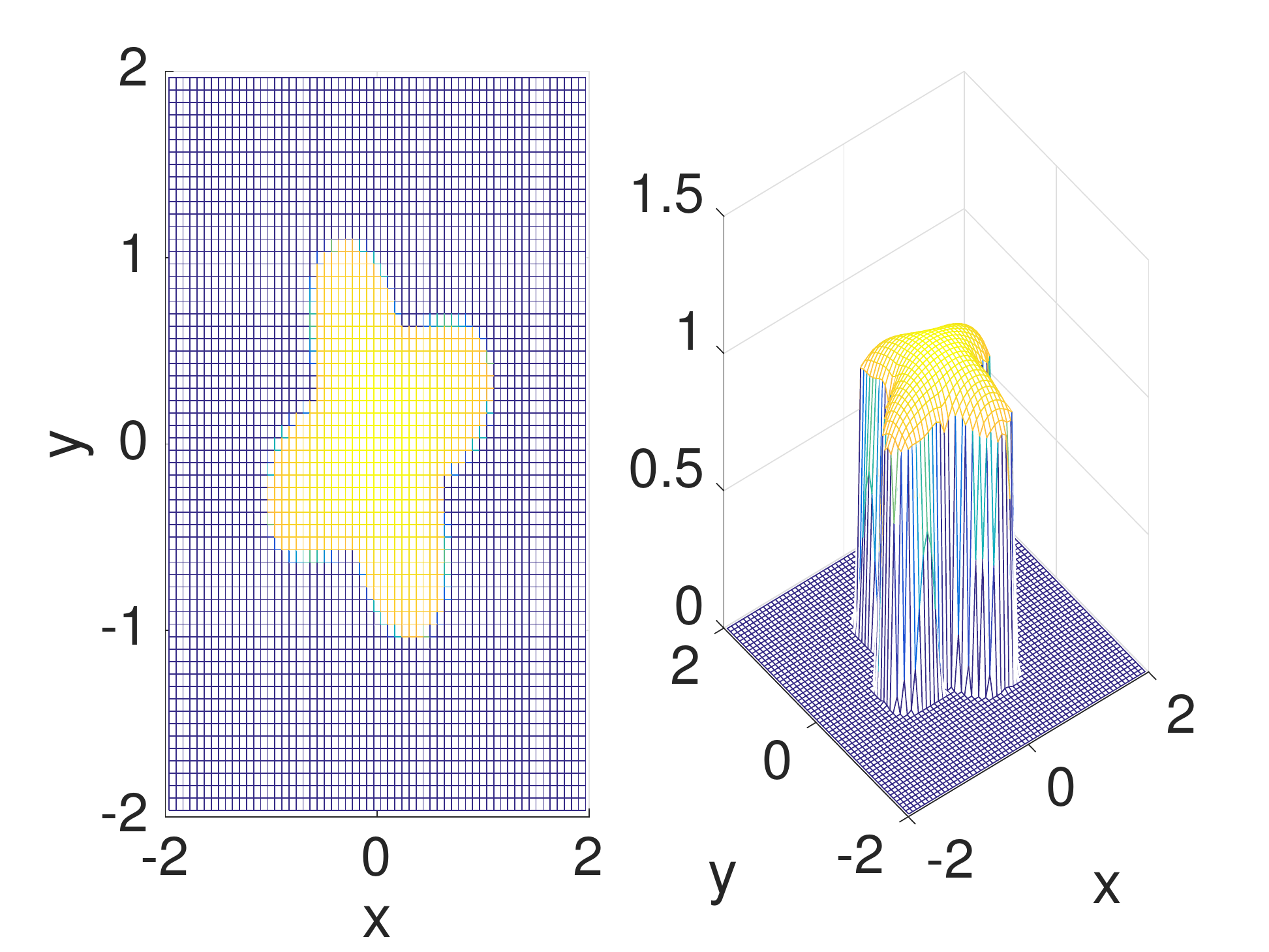}
\caption{Plot of $n$ at four different times with initial data (\ref{IC:2D27}). From left to right, up to down, $t = 0$, $t = 0.0177$, $t=0.0311$, $t = 0.05$.}
\label{fig:2D-flower}
\end{figure}

\section*{Acknowledgments}
J. Liu is partially supported by KI-Net NSF RNMS grant No.11-07444 and NSF grant DMS-1514826.
M. Tang is supported by Science Challenge Project No. TZZT2017-A3-HT003-F and NSFC 91330203.
Z. Zhou is partially supported by RNMS11-07444 (KI-Net) and the start up grant from Peking University. L. Wang is partially supported by the start up grant from SUNY Buffalo and NSF grant
DMS-1620135. M. Tang and L. Wang would like to thank Prof. Jose Carrillo for fruitful discussions on free boundary problems.

\end{document}